\newtheorem{theorem}{Theorem}[section]
\newtheorem{definition}{Definition}[section]
\newtheorem{lemma}{Lemma}[section]
\newtheorem{proposition}{Proposition}[section]
\newtheorem{remark}{Remark}[section]
\newtheorem{assumption}{Assumption}[section]
\newtheorem{algorithm}{Algorithm}[section]
\theoremstyle{example}
\theoremstyle{definition}
\theoremstyle{Assumption}
\DeclareMathOperator*{\argmin}{argmin}
\DeclareMathOperator*{\gra}{gra}
\DeclareMathOperator*{\dom}{dom}
\DeclareMathOperator*{\zer}{zer}
\DeclareMathOperator*{\Id}{Id}
\DeclareMathOperator*{\nt}{int}
\DeclareMathOperator{\Pro}{Prob}
\DeclareMathOperator{\prox}{prox}
\begin{document}
	\title{\textbf{Nonlinear forward-backward-half forward splitting with momentum for monotone inclusions}}
	\author{ \sc \normalsize  Liqian Qin$^{a}${\thanks{ email: qlqmath@163.com}},\,\, Yuchao Tang$^{a}${\thanks{email: hhaaoo1331@163.com}}
  ,\,\, Jigen Peng$^{a}${\thanks{ email: jgpeng@gzhu.edu.cn}} \\
		\small $^a$School of Mathematics and Information Science, Guangzhou University,\\
 \small  Guangzhou 510006, China,\\
	}
	\date{}
	\maketitle
	
	\begin{abstract}
{In this work, we propose a new splitting algorithm for solving structured monotone inclusion problems composed of a maximally monotone operator, a maximally monotone and Lipschitz continuous operator and a cocoercive operator. Our method augments the forward–backward-half forward splitting algorithm with a nonlinear momentum term. Under appropriate conditions on the step-size, we prove the weak convergence of the proposed algorithm. A linear convergence rate is also obtained under the strong monotonicity assumption. Furthermore, we investigate a stochastic variance-reduced forward-backward-half forward splitting algorithm with momentum for solving finite-sum monotone inclusion problems. Weak almost sure convergence and linear convergence are also established under standard condition. Preliminary numerical experiments on synthetic datasets and real-world quadratic programming problems in portfolio optimization demonstrate the effectiveness and superiority of the proposed algorithm.
		}
	\end{abstract}
	
	\noindent{\bf Keywords:} Monotone inclusions; Forward-backward-half forward splitting algorithm; Momentum; Variance reduction.

\section{Introduction}
In this work, we focus on solving the structured monotone inclusion problems in a real Hilbert space $\mathcal{H}$:
\begin{equation}
\label{ABC}
	\hbox{find} \ x \in \mathcal{H}  \ \hbox{such that} \  0 \in Ax+Bx+Cx,
\end{equation}
where $A: \mathcal{H} \rightarrow 2^{\mathcal{H}}$ is a multi-valued maximally monotone operator, $B: \mathcal{H} \rightarrow \mathcal{H}$ is a single-valued monotone and Lipschitz operator, and  $C: \mathcal{H} \rightarrow \mathcal{H}$ is a cocoercive operator. The monotone inclusion problem not only captures a wide range of convex minimization problems arising in fields such as signal and image processing \cite{Condat2023} and machine learning \cite{{Boyd2011},{Barnert},{Bot2023}}, but also encompasses certain structured monotone inclusion problems as special cases, see for example \cite{{Davis},{FBHF},{Latafat},{Zong}}. In particular, let $\mathcal{H}$ and $\mathcal{G}$ be real Hilbert spaces, and consider the following structured monotone inclusion problems:
\begin{equation}
	\label{ABC2}
	\hbox{find} \ x \in \mathcal{H}  \ \hbox{such that} \  0 \in Ax+L^*(B\Box D \Box E)Lx+Cx,
\end{equation}
where $A: \mathcal{H} \rightarrow 2^{\mathcal{H}}$ is maximally monotone, $C:\mathcal{H} \rightarrow \mathcal{H}$ is $\frac{1}{\beta}$-cocoercive, for some $\beta>0,$ $L:\mathcal{H} \rightarrow \mathcal{G}$ is a bounded linear operator with its adjoint $L^*,$  $B, \ D, \ E:\mathcal{G} \rightarrow 2^{\mathcal{G}}$ are maximally monotone, $D^{-1}$ is $L_D$-Lipschitz, $E^{-1}$ is $\frac{1}{\beta_E}$-cocoercive, and $B\Box D \Box E=(B^{-1}+D^{-1}+E^{-1})^{-1}$ is known as the parallel sum of $B, \ D$ and $E$. An element $x$ is a solution of problem \eqref{ABC2} if and only if there exists $y \in \mathcal{G}$ such that
\begin{equation}
	0 \in \left[
	\begin{array}{cc}
		A & 0 \\
		0 & B^{-1}
	\end{array}
	\right]
	\left[
	\begin{array}{c}
		x\\
		y
	\end{array}
	\right]
	+\left[
	\begin{array}{cc}
		0 & L^* \\
		-L & D^{-1}
	\end{array}
	\right]
	\left[
	\begin{array}{c}
		x\\
		y
	\end{array}
	\right]
	+\left[
	\begin{array}{cc}
		C & 0 \\
		0 & E^{-1}
	\end{array}
	\right]
	\left[
	\begin{array}{c}
		x\\
		y
	\end{array}
	\right].
\end{equation}
This inclusion corresponds to a special case of \eqref{ABC}, since the first operator is maximally monotone, the second is monotone and Lipschitz continuous, and the third is cocoercive. This general framework provides a unified perspective for analyzing and solving diverse problems in optimization and variational analysis.

Given the wide applicability of the structured monotone inclusion problem \eqref{ABC} and its special cases like \eqref{ABC2}, the development of efficient and practical solution algorithms has become a major research focus. Specifically, designing fully splitting algorithms that can process each operator individually and efficiently—multi-valued operators via their resolvents and single-valued operators via explicit evaluations—is critical for leveraging problem structure and enabling scalable implementations.  In the following subsection, we review key related works in this field, focusing on algorithmic approaches for solving monotone inclusion problems, and highlighting the context and motivation behind the contributions presented in this paper.

\subsection{Related works}

\quad In the field of monotone inclusions, researchers are highly motivated to design fully splitting algorithms that allow each operator involved to be processed individually and efficiently, including but not limited to those presented in \cite{{Davis},{Latafat},{Rie},{Ryuu},{Yu},{Zong},{Tang}}. These algorithms aim to exploit the structure of the problem by treating different types of operators with appropriate strategies. Specifically, multi-valued monotone operators are typically handled via their resolvent operators, while single-valued (and often Lipschitz continuous) operators are incorporated through explicit evaluations. This splitting approach not only facilitates the implementation of the algorithms but also enhances their flexibility and scalability when applied to large-scale or structured problems.

\noindent{\bf Forward–backward-half forward splitting algorithm.}
\quad
For monotone inclusion problems involving two operators, many classical algorithms have been proposed and extensively studied \cite{{Malitsky},{Briceno},{Csetnek},{Cevher}}, such as the forward-backward splitting algorithm \cite{{Lions},{Passty}}, the Douglas-Rachford splitting algorithm \cite{DR}, and Tseng's forward-backward-forward splitting algorithm \cite{Tseng}. However, when the problem involves three operators, traditional approaches often resort to combining certain operators to reduce the problem to a two-operator setting. Although theoretically feasible, this strategy typically fails to fully exploit the structural properties of each operator, which may lead to suboptimal efficiency or convergence behavior. In particular, for the monotone inclusion problem considered in this paper,
although $B+C$ inherits monotonicity and Lipschitz continuity, Tseng's method for solving problem \eqref{ABC} suffers from two limitations: (i) requiring two evaluations of each operator per iteration; (ii) failing to leverage the cocoercivity property of $C$. Consequently, designing fully splitting algorithms that can process all three operators separately while leveraging their individual properties remains a significant and active area of research. For problem \eqref{ABC}, Brice\~{n}o et al. \cite{FBHF}
first proposed the forward-backward-half forward (FBHF) splitting algorithm. This algorithm fully exploits the cocoercivity of $C$.
In their approach, while the continuous monotone operator still requires two evaluations per iteration, but reduce the cost for the cocoercive operator to a single evaluation by leveraging its cocoercivity. The FBHF splitting algorithm takes the advantage of the inherent structural properties of individual operators, providing a unified generalization of both Forward-Backward splitting algorithm and Tseng’s method. The detailed FBHF splitting algorithm is defined as follows:
\begin{equation}
		\label{FBHF}
		\left\{
		\begin{array}{lr}
			y_k=J_{\gamma_k A}(x_k-\gamma_k(B+C)x_k), & \\
			x_{k+1}=y_k+\gamma_k(Bx_k-By_k), & \\
		\end{array}
		\right.
\end{equation}
where $ \gamma_k $ is the step-size,  $J_{\gamma_k A}=( \Id+\gamma_k A)^{-1}$ is the resolvent of $A$. They proved the weak convergence of Algorithm \eqref{FBHF} with variable step-size: $\gamma_{k}\in [\eta , \chi-\eta]$, $\eta \in (0,\frac{\chi}{2})$, $\chi=\frac{4 }{\beta+\sqrt{\beta^2+16L^2}}$, and line search procedure. 
Furthermore, they also consider the FBHF splitting algorithm with non-self-adjoint linear operators. The algorithm iterates as follows:
\begin{equation}
	\label{FBHF2}
	\left\{
	\begin{array}{lr}
		y_k=(M_k+A)^{-1}(M_kx_k-(B+C)x_k), & \\
		x_{k+1}=x_k+\lambda_k(M_k(y_k-x_k)+Bx_k-By_k), & \\
	\end{array}
	\right.
\end{equation}
where $M_k: \mathcal{H} \rightarrow  \mathcal{H}$ are bounded linear operators. Under certain boundedness and coercivity conditions on the self-adjoint and skew-symmetric parts of operators $M_k$, as well as constraints on associated Lipschitz and step-size parameters, the weak convergence of the iterative sequence \eqref{FBHF2} is established. Since the introduction of the FBHF splitting algorithm, numerous researchers have explored various extensions, such as inertial \cite{Tang}, relaxed inertial \cite{Zong2024}, multi-step inertial schemes \cite{Zong2023}, Four-operator \cite{Roldan2025a}, and algorithms with deviations \cite{Qin2024}.

\noindent{\bf Warped resolvent.}
The resolvent operator plays a central role in the analysis and design of various operator splitting algorithms. To broaden its scope of applicability, the concept of the nonlinear or warped resolvent, as developed by Giselsson and B$\grave{u}$i-Combettes \cite{bui, Giselsson}, has been introduced. This generalized form of the resolvent has proven effective in modeling a broad range of both existing and novel monotone inclusion algorithms, thereby offering a unified and flexible framework for algorithm design in monotone operator theory. The primary motivation behind introducing warped resolvent lies in leveraging a strategically designed kernel, customized to match the inherent structure of an inclusion problem, to establish streamlined frameworks for devising and analyzing novel splitting algorithms. However, this increased flexibility might require computational trade-offs. The methods proposed in \cite{{bui},{Giselsson},{Latafat}} necessitate an extra corrective projection step to ensure convergence. Later, in \cite{Morin}, the authors proposed a nonlinear forward-backward splitting algorithm with momentum correction. This method  introduces an alternative update correction method that avoids the need for expensive projections, iterates as follows:
\begin{equation}
\label{maincite}
\aligned
&x_{k+1}=(M_k+A)^{-1}(M_kx_k-Cx_k+\gamma_k^{-1}u_k), \\
&u_{k+1}=(\gamma_kM_k-S)x_{k+1}-(\gamma_kM_k-S)x_k.
\endaligned
\end{equation}
where $M_k: \mathcal{H} \rightarrow \mathcal{H}$ are general nonlinear operators, $\gamma_k >0,$ and $S$ is a bounded linear, self-adjoint, strongly positive operator. For the convergence analysis, it is assumed that $\gamma_kM_k-S$ is $L_k$-Lipschitz continuous w.r.t. $S$ for some $L_k \geq 0,$ $\forall k \in \mathbb{N}.$ Under this assumption, $M_k$ is Lipschitz continuous w.r.t. $S$, maximally monotone and strongly monotone w.r.t. $S$ for all $ k \in \mathbb{N}.$ Therefore, the warped resolvent of $A$ with kernel $M_k,$ i.e., $(M_k+A)^{-1} \circ M_k,$ is viable for all $ k \in \mathbb{N}$  \cite{bui}. In \cite{Bredies}, the authors introduced the concept of an admissible preconditioner. For an operator $A: \mathcal{H} \rightarrow 2^{\mathcal{H}},$ a bounded linear operator $M$ that is self-adjoint and positive semidefinite is called an admissible preconditioner if $(M+A)^{-1}M$ is single valued with full domain. The remarkable advantage of nonlinear resolvents is their potent modeling capabilities. This enables a comprehensive and unified perspective on an extensive range of algorithms. In fact, Algorithm \eqref{maincite} is demonstrated by deriving a wide range of special cases, such as the forward-backward splitting algorithm, the forward-reflected-backward splitting algorithm \cite{Malitsky}, etc. Recently,  Rold\'{a}n and Vega \cite{Roldan2025} proposed an inertial and relaxation extension of \eqref{maincite}.
To the best of our knowledge, there is no existing work that considers the FBHF splitting algorithm with momentum, as introduced by \cite{Morin}.

\vskip 5mm

\noindent{\bf Variance reduction stochastic method.}
\quad
Within optimization frameworks, monotone inclusion problems are often formulated as finite-sum problems. For example, finite-sum minimization is ubiquitous in machine learning, where one typically minimizes the empirical risk \cite{LJC}. Stochastic optimization plays a central role in this context. In recent years, a major advancement in this field has been the development of variance-reduction (VR) techniques for stochastic algorithms.
It is important to note that the existing results for solving problem \eqref{ABC} are limited to deterministic methods and do not incorporate stochastic components.  However, evaluating the operator $\sum_{i=1}^N B_ix$ can  become computationally prohibitive in high-dimensional settings. To alleviate this  significant computational burden, we consider the problem \eqref{ABC} under the assumption that the operator $B$ has a finite-sum structure. VR techniques are particularly well-suited to exploit this structure and improve algorithmic efficiency. As a result, numerous stochastic algorithms incorporating variance reduction have been developed for solving  finite-sum monotone inclusion problems \cite{{Cai},{Caixufeng}, {Qin2025},{Sadiev},{Su},{Zhang}}.
To effectively address the computational challenges posed by high-dimensional data in monotone inclusion problems with a finite-sum structure, variance-reduction (VR) techniques have emerged as a powerful tool. Stochastic optimization serves as the backbone of modern machine learning, with stochastic gradient descent (SGD) being one of the pioneering algorithms in this domain \cite{SGD}. Building on this foundation, VR methods such as SAGA \cite{SAGA}, the stochastic average gradient method (SAG) \cite{SAG1, SAG2}, and the stochastic dual coordinate ascent method (SDCA) \cite{ShalevShwartz2013} have fundamentally reshaped the landscape of stochastic optimization by achieving provably faster convergence rate than SGD \cite{Gower}. Among these, the stochastic variance-reduced gradient method (SVRG) \cite{SVRG} stands out as a representative approach that leverages control variates to reduce gradient variance. More recently, Kovalev et al. \cite{KHR} introduced a loopless variant of SVRG, which eliminates the outer loop in the original algorithm and instead applies a probabilistic mechanism to update the full gradient, further improving practicality and efficiency.

Building upon these developments, Alacaoglu et al.  \cite{Alacaoglu-Malitsky} recently proposed a loopless variant of the extragradient method that incorporates variance-reduction techniques for solving variational inequalities in a finite dimensional Euclidean space. Specifically, they consider the following problem:
\begin{equation}
	\label{VI}
	\hbox{find} \ x \in \mathcal{Z}  \ \hbox{such that} \ \langle F(x) , z-x\rangle +g(z)-g(x) \geq 0, \ \forall z \in \mathcal{Z},
\end{equation}
where  $ \mathcal{Z}$ is a finite dimensional vector space, $F$ is a monotone operator, and $g$ is a proper convex, and lower semicontinuous function.  The algorithm iterates as follows:
\begin{equation}
	\label{maincitealg}
	\left\{
	\begin{array}{lr}
	\bar{x}_k= \alpha x_k+(1-\alpha)w_k, & \\
	z_{k+\frac{1}{2}}= \prox_{\tau g}(\bar{x}_k-\tau F(w_k)),& \\
	\hbox{Draw an index} \ \xi_k \ \hbox{according to} \ Q,& \\
	z_{k+1}=\prox_{\tau g}(\bar{x}_k-\tau(F(w_k)+F_{\xi_k}(z_{k+\frac{1}{2}})-F_{\xi_k}(w_k))),&\\
	w_{k+1}=
	\begin{cases}
		x_{k+1},&\hbox{with probability} \,\,p,\\
		w_k,&\hbox{with probability} \,\,1-p,
	\end{cases}
	\end{array}
	\right.
\end{equation}
where $\alpha \in (0,1),$ $\tau$ is the step-size parameter, $Q$ is a probability distribution, and  $p \in (0,1).$ They have proved the almost sure convergence of the Algorithm \eqref{maincitealg} in a finite dimensional Euclidean space. The authors further extended this approach to Tseng's method \cite{Tseng} and forward-reflected-backward splitting algorithm \cite{Malitsky} for solving monotone inclusion problems involving two operators in finite dimensional Euclidean space:
\begin{eqnarray*}
\mbox{find} \ x \in \mathbb{R}^d \ \ \mbox{such that} \ 0\in Ax+Bx,
\end{eqnarray*}
where $A: \mathbb{R}^d \rightarrow 2^{\mathbb{R}^d}$ is a set-valued maximally monotone operator and $B: \mathbb{R}^d \rightarrow \mathbb{R}^d$ is a single-valued maximally monotone operator. The operator $B$ has a stochastic oracle $B_{\xi}$ that is unbiased and Lipschitz in mean. Under some conditions, they have proved the almost sure convergence of the proposed algorithm.

Motivated by the aforementioned works, we consider the following problem setting. Let  $\mathcal{X}$ be a separable Hilbert space, and suppose that the monotone operator $B$  in problem \eqref{ABC} admits a finite-sum representation, i.e., $B= \sum_{i=1}^N B_i$, where each $B_i$ is $L_i$-Lipschitz. Specifically, we study the problem:
\begin{equation}\label{PROB2}
\mbox{find} \ x \in \mathcal{X}  \ \ \mbox{such that} \ 0\in Ax+\sum_{i=1}^N B_ix+Cx.
\end{equation}
The assumptions on the other operators remain the same as those in problem \eqref{ABC}. Notably, extending stochastic Tseng's method with variance-reduction to solve problem \eqref{PROB2} introduces an additional requirement: the cocoercive operator $C$ must also possess a finite-sum structure.

\vskip 5mm

\subsection{Contributions}
The FBHF splitting algorithm is the first splitting algorithm proposed for  solving problem \eqref{ABC}. It integrates the classical forward-backward splitting algorithm with Tseng's forward-backward-forward splitting scheme. Due to its flexibility and efficiency, it has been extended and generalized to various problem settings.
In this work,  we propose a nonlinear forward-backward-half forward splitting with momentum (Algorithm \ref{algorithm1}) to solve the monotone inclusion problem \eqref{ABC}. Unlike existing approach \cite{FBHF} that require operators $M_k$ to be the bounded linear operators, $M_k$ in this paper are nonlinear operators. The convergence analysis in this paper imposes distinct assumptions on $M_k$ compared to \cite{FBHF}, we emphasize that these assumptions are neither stronger nor weaker but provide an alternative theoretical framework with a tailored proof procedure. Furthermore, we develop a stochastic variance-reduced forward-backward-half forward splitting algorithm with momentum (Algorithm \ref{algorithm5}) for the finite-sum structured problem \eqref{PROB2} (a special case of \eqref{ABC}). Under appropriate parameter settings, Algorithm \ref{algorithm5} reduces to the deterministic Algorithm \ref{algorithm1}.
The key contributions of this paper can be summarized as follows:
\begin{itemize}
	\item[(i)] We propose a nonlinear forward-backward-half forward splitting with momentum for solving problem \eqref{ABC}. Furthermore, weak convergence is established under the well-defined parameter condition. Under strong monotonicity conditions, we establish that the Algorithm \ref{algorithm1} achieves linear convergence rate.
	\item[(ii)] Based on the characteristics of the proposed momentum-based algorithm, we not only recover several existing algorithms but also develop a new class of operator splitting algorithms for solving monotone inclusions involving the sum of four operators. This approach effectively leverages the structural properties of each operator, ensuring good convergence behavior and computational efficiency.
	\item[(iii)] We propose a stochastic variance-reduced forward-backward-half forward splitting algorithm with momentum to solve problem \eqref{PROB2}, where the problem possesses a finite-sum structure. In a separable real Hilbert space, the weak almost sure convergence  of Algorithm \ref{algorithm5} is obtained. And the linear convergence rate of Algorithm \ref{algorithm5} is also given under the stronger assumption, i.e. strong monotonicity.
\end{itemize}
The paper is organized as follows. Section 2 recalls some fundamental definitions and lemmas essential for subsequent analysis. In Section 3, we introduce a nonlinear forward-backward-half forward splitting algorithm with momentum for solving problem \eqref{ABC}, establishing its weak convergence under standard assumptions and proving linear convergence under additional conditions of strong monotonicity. Section 4 further develops a stochastic variance-reduced variant with momentum to address problem \eqref{PROB2}, where we demonstrate the weak almost sure convergence and establish the linear convergence rates. Section 5 presents a series of numerical experiments conducted on both synthetic datasets and real-world quadratic programming problems arising in portfolio optimization. Finally, we summarize the main conclusions.

\section{ Preliminaries}
	
Throughout this paper,  $\mathcal{H}$ and $\mathcal{G}$ denote real Hilbert spaces equipped with inner product  $\langle \cdot, \cdot\rangle$ and the corresponding norm $\|\cdot\|$. Let $\mathcal{X}$ be a separable Hilbert space,  and let $\mathcal{B}$ denote the Borel $\sigma$-algebra on $\mathcal{X}$. Let $\mathbb{N}$ denote the set of nonnegative integers and $\mathbb{N}_0$ the set of positive integers. Let $(\Omega, \mathcal{F}, P)$ be a probability space. A $\mathcal{X}$-valued random variable is a measurable mapping $x: (\Omega, \mathcal{F})\rightarrow (\mathcal{X}, \mathcal{B})$. The $\sigma$-algebra
generated by a family $\Phi$ of random variables is denoted by $\sigma(\Phi)$. Let $\mathscr{F}= \{\mathcal{F}_k\}_{k\in \mathbb{N}}$
be a sequence of sub-sigma algebras of $\mathcal{F}$ such that $\mathcal{F}_k \subset \mathcal{F}_{k+1}$.  The probability
mass function $P_{\xi}(\cdot)$ is supported on the finite set $\{1, \ldots, N\}$.  Strong  and weak convergence are denoted by $``\rightarrow"$ and $``\rightharpoonup"$, respectively.  For a linear operator $L: \mathcal{H} \rightarrow \mathcal{G}$, we denote its adjoint by $L^*: \mathcal{G} \rightarrow \mathcal{H}$, and its operator norm by $\|L\|$. An operator $S:\mathcal{H} \rightarrow \mathcal{H}$ is said to be self-adjoint if $S^* = S$, and strongly monotone if there exists a constant $m \in (0, +\infty)$ such that
\begin{equation}
\langle Sx | x \rangle \geq m \|x\|^2, \quad \forall x \in \mathcal{H}.
\end{equation}
Define $$\mathcal{P}(\mathcal{H})=\{S:\mathcal{H} \rightarrow \mathcal{H}|S \ \hbox{is linear, self-adjoint, and strongly positive}\}.$$ Then for any $S \in \mathcal{P}(\mathcal{H})$, the operator $S$ is invertible and its inverse $S^{-1}$ also belongs to $\mathcal{P}(\mathcal{H})$. The inner product and norm induced by $S$ are denoted by $\langle \cdot ,\cdot \rangle_S = \langle S (\cdot), \cdot \rangle$ and $\|\cdot\|_S$, respectively. These norms are equivalent on the Hilbert space $\mathcal{H}$; that is, there exist $C_1, \ C_2>0$ such that $C_1\|x\| \geq \|x\|_S \geq C_2\|x\|$ for all $x\in \mathcal{H}$.

Observe that $S =S^{\frac{1}{2}}\circ S^{\frac{1}{2}},$ where $S^{\frac{1}{2}}$ is a self-adjoint, strongly monotone linear operator. As a result, the Cauchy-Schwarz inequality extends to the norms
induced by $S$ and $S^{-1}$  in the following form:
\begin{equation}
|\langle x,u \rangle| = |\langle S^{-\frac{1}{2}}x,S^{\frac{1}{2}}u \rangle|\leq  \| S^{-\frac{1}{2}}x\| \|S^{\frac{1}{2}}u \| =
\|x\|_{S^{-1}}\|u\|_S, \quad \forall (x,u) \in \mathcal{H}\times\mathcal{H}. \end{equation}
Moreover, the following identity holds for all $x,y,z\in \mathcal{H}$:
\begin{equation}
2\langle x - y | y - z\rangle_S = \|x - z\|_S^2 - \|x - y\|_S^2 - \|y - z\|_S^2.
\end{equation}

\begin{definition}{\rm(\cite{BC2011}\ )}
			{\rm
				Let $A: \mathcal{H} \rightarrow 2^{\mathcal{H}}$  be a set-valued mapping and the graph of $A$ is defined by $\gra(A)=\{(x,u) \in \mathcal{H}\times\mathcal{H}:u \in Ax\}.$ $A$ is said to be
				\begin{itemize}
					\item[(i)]  monotone if $\langle u-v,x-y \rangle \geq 0$ for all $(x,u),(y,v) \in \gra(A)$.
					\item[(ii)]  maximally monotone if there exists no monotone operator $B: \mathcal{H} \rightarrow 2^{\mathcal{H}}$ such that gra$(B)$ properly contains gra$(A),$ i.e., for every $(x,u) \in \mathcal{H}\times\mathcal{H}$
					$$
					(x,u) \in \gra (A)  \ \ \Leftrightarrow \ \ \langle u-v , x-y \rangle \geq 0,  \ \  \forall(y,v)\in \gra (A).
					$$
				\end{itemize}
			}
		\end{definition}
		\begin{definition}
			{\rm
				Let $T: \mathcal{H} \rightarrow {\mathcal{H}}$ be a single-valued operator, $T$ is said to be
				\begin{itemize}
					\item[(i)]  $L$-Lipschitz continuous w.r.t. $S$, if there exists a constant $ L > 0$, such that
					$$
					\|Tx-Ty\|_{S^{-1}}\leq  L\|x-y\|_S,\quad\forall x, y\in \mathcal{H}. \\
					$$
					\item[(ii)]  $\beta^{-1}$-cocoercive operator w.r.t. $S$, if there exists a constant $ \beta > 0$, such that
					$$
					\langle Tx-Ty,x-y \rangle \geq  \beta^{-1}\|Tx-Ty\|_{S^{-1}}^2,\quad\forall x, y\in \mathcal{H}.
					$$

An operator is $\beta^{-1}$-cocoercive w.r.t. $S$ is also $\beta$-Lipschitz continuous w.r.t. $S$.
				\end{itemize}
			}
		\end{definition}

\begin{definition}{\rm(\cite{BC2011}\ )}	{\rm
We denote the proximity of operator $f$ by
\begin{equation}
	\prox_{\gamma f}(x) :x \mapsto \argmin_{y \in \mathcal{H}} \left(f(y)+\frac{1}{2\gamma} \|x-y\|^2\right),
\end{equation}
where $\gamma$ is the step-size parameter.}
\end{definition}

We shall make full use of the following lemmas to conduct the convergence analysis.

\begin{lemma}{\rm(\cite{{Morin}}\ )}\label{lem0}
Let $C$ be the $\beta^{-1}$-cocoercive operator w.r.t. $S$. Then the following inequality holds:
\begin{equation}
\langle Cx - Cy, z - y\rangle\geq-\frac{\beta}{4}\|z-x\|_{S}^{2},\quad\forall x,y,z\in \mathcal{H}.
\end{equation}
\end{lemma}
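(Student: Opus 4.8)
The plan is to prove the cocoercivity-type inequality
\[
\langle Cx - Cy, z - y\rangle \geq -\frac{\beta}{4}\|z - x\|_S^2
\]
by exploiting the defining inequality of a $\beta^{-1}$-cocoercive operator together with a completion-of-squares argument. First I would rewrite the left-hand side by inserting the point $x$ into the second slot: write $z - y = (z - x) + (x - y)$, so that
\[
\langle Cx - Cy, z - y\rangle = \langle Cx - Cy, z - x\rangle + \langle Cx - Cy, x - y\rangle.
\]
For the second term, the cocoercivity of $C$ w.r.t.\ $S$ gives $\langle Cx - Cy, x - y\rangle \geq \beta^{-1}\|Cx - Cy\|_{S^{-1}}^2$. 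For the first term, I would apply the extended Cauchy--Schwarz inequality stated in the preliminaries, namely $|\langle a, b\rangle| \leq \|a\|_{S^{-1}}\|b\|_S$ with $a = Cx - Cy$ and $b = z - x$, to get the lower bound $\langle Cx - Cy, z - x\rangle \geq -\|Cx - Cy\|_{S^{-1}}\|z - x\|_S$.

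Combining these two estimates, it remains to show that
\[
\beta^{-1}\|Cx - Cy\|_{S^{-1}}^2 - \|Cx - Cy\|_{S^{-1}}\|z - x\|_S \geq -\frac{\beta}{4}\|z - x\|_S^2.
\]
Setting $t = \|Cx - Cy\|_{S^{-1}}$ and $s = \|z - x\|_S$, this is the scalar inequality $\beta^{-1} t^2 - t s + \frac{\beta}{4} s^2 \geq 0$, which holds because the left-hand side equals $\left(\beta^{-1/2} t - \frac{\beta^{1/2}}{2} s\right)^2 \geq 0$ (equivalently, the quadratic in $t$ has discriminant $s^2 - 4\cdot\beta^{-1}\cdot\frac{\beta}{4}s^2 = 0$). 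This completes the argument.

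The proof is essentially routine once the correct splitting $z - y = (z - x) + (x - y)$ is chosen; the only mild subtlety — and the step I would be most careful about — is ensuring that the Cauchy--Schwarz bound is applied with the mixed norms $\|\cdot\|_{S^{-1}}$ and $\|\cdot\|_S$ in the right order (which is exactly the form recorded in the preliminaries), so that the cross term matches the square term produced by cocoercivity and the completion of squares goes through with the sharp constant $\beta/4$. No further obstacle is anticipated.
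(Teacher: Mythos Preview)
Your argument is correct. Note, however, that the paper does not actually prove this lemma: it is quoted from \cite{Morin} and stated without proof, so there is no in-paper argument to compare against. Your splitting $z-y=(z-x)+(x-y)$, followed by cocoercivity on the $(x-y)$ term, the mixed-norm Cauchy--Schwarz inequality on the $(z-x)$ term, and the completion of squares $\beta^{-1}t^2 - ts + \tfrac{\beta}{4}s^2 = (\beta^{-1/2}t - \tfrac{1}{2}\beta^{1/2}s)^2 \geq 0$, is precisely the standard route and yields the sharp constant $\beta/4$.
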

	
\begin{lemma}{\rm(\cite{BC2011}\ )}\label{lem3}
Let $A: \mathcal{H} \rightarrow 2^{\mathcal{H}}$ be maximally monotone. Then $\gra(A)$ is sequentially closed in $\mathcal{H}^{weak}\times\mathcal{H}^{strong}$, i.e., for every sequence $(x_{k},y_{k})_{k \in \mathbb{N}}$ in $\gra(A)$ and $(x,y)\in \mathcal{H}\times\mathcal{H}$, if $x_{k}\rightharpoonup x$ and $y_{k}\rightarrow y$, then $(x,y) \in \gra(A)$. \vskip 1mm
\end{lemma}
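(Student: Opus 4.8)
The plan is to exploit the maximal monotonicity of $A$ to reduce the claim to a single limiting inequality. Recall that, because $A$ is maximally monotone, a pair $(x,y)\in\mathcal{H}\times\mathcal{H}$ belongs to $\gra(A)$ if and only if $\langle y-v,x-u\rangle\geq 0$ for every $(u,v)\in\gra(A)$. Hence, given the sequence $(x_{k},y_{k})_{k\in\mathbb{N}}$ in $\gra(A)$ with $x_{k}\rightharpoonup x$ and $y_{k}\rightarrow y$, it suffices to fix an arbitrary $(u,v)\in\gra(A)$ and prove that $\langle y-v,x-u\rangle\geq 0$.

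First I would apply monotonicity of $A$ to the pairs $(x_{k},y_{k})$ and $(u,v)$, which yields $\langle y_{k}-v,x_{k}-u\rangle\geq 0$ for every $k$. The remaining work is to pass to the limit on the left-hand side. Writing $a_{k}:=y_{k}-v\rightarrow y-v=:a$ strongly and $b_{k}:=x_{k}-u\rightharpoonup x-u=:b$ weakly, I would use the elementary decomposition $\langle a_{k},b_{k}\rangle-\langle a,b\rangle=\langle a_{k}-a,b_{k}\rangle+\langle a,b_{k}-b\rangle$. The second term tends to $0$ by definition of weak convergence; the first term is bounded in absolute value by $\|a_{k}-a\|\,\|b_{k}\|$, and since weakly convergent sequences are norm-bounded (Banach–Steinhaus), $\sup_{k}\|b_{k}\|<\infty$, so this term tends to $0$ as well. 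Therefore $\langle y_{k}-v,x_{k}-u\rangle\rightarrow\langle y-v,x-u\rangle$, and passing to the limit in the inequality gives $\langle y-v,x-u\rangle\geq 0$. Since $(u,v)\in\gra(A)$ was arbitrary, maximal monotonicity of $A$ forces $(x,y)\in\gra(A)$, which is the assertion.

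The only point requiring care — and the ``main obstacle'', though a mild one — is the limit interchange in the bilinear pairing under mixed weak/strong convergence: it is precisely the strong convergence of $(y_{k})$ that makes the cross term $\langle a_{k}-a,b_{k}\rangle$ vanish even though $(b_{k})$ converges only weakly, and boundedness of $(x_{k})$ is both needed and available. Note that no continuity, nonexpansiveness, or demiclosedness property of $A$ itself is invoked beyond its monotonicity and maximality.
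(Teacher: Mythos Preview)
Your argument is correct and is precisely the standard proof (as in the cited reference \cite{BC2011}); the paper itself does not supply a proof but merely quotes the result. There is nothing to add: the reduction via the maximality characterization together with the mixed weak--strong continuity of the pairing is exactly the intended route.
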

\begin{lemma}\label{lem1}{\rm(\cite{BC2011}\ )}
Let $D$ be a nonempty set of $\mathcal{H}$, and $\{x_{k}\}_{k \in \mathbb{N}}$ be a sequence in $\mathcal{H}$ such that the following two conditions hold:\vskip 1mm
\begin{itemize}
\item[{\rm(i)}]  For every $x \in D$, $\lim\limits_{k\rightarrow \infty}\|x_{k}-x\|$ exists;
\noindent
\item[{\rm(ii)}] Every weak sequential cluster point of $\{x_{k}\}_{k \in \mathbb{N}}$ belongs to $D$.
\end{itemize}
Then the sequence $\{x_{k}\}_{k \in \mathbb{N}}$ converges weakly to a point in $D$.
\end{lemma}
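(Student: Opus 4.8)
This final statement is Opial's lemma, and the plan is the classical three-step argument combining the Fej\'er-type hypothesis (i) with the Hilbert space structure, using (ii) only to locate the limit inside $D$. First I would note that $\{x_k\}_{k\in\mathbb{N}}$ is bounded: since $D\neq\emptyset$, fix any $x_0\in D$; by (i) the limit $\lim_{k\to\infty}\|x_k-x_0\|$ exists and is finite, so $\sup_{k}\|x_k\|<\infty$. Because $\mathcal{H}$ is a Hilbert space, hence reflexive, bounded sequences are weakly sequentially compact, so $\{x_k\}_{k\in\mathbb{N}}$ has at least one weak sequential cluster point, and by (ii) every such cluster point belongs to $D$.

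Second, I would show the weak sequential cluster point is unique. Suppose $x$ and $y$ are both weak sequential cluster points; by the previous step $x,y\in D$, so both $\lim_{k\to\infty}\|x_k-x\|$ and $\lim_{k\to\infty}\|x_k-y\|$ exist. Expanding
\[
\|x_k-x\|^2-\|x_k-y\|^2 = 2\langle x_k,\,y-x\rangle + \|x\|^2-\|y\|^2 ,
\]
the left-hand side converges, so $\langle x_k,\,y-x\rangle$ converges to some $\ell\in\mathbb{R}$. Passing to a subsequence with $x_{k_j}\rightharpoonup x$ gives $\ell=\langle x,\,y-x\rangle$; passing to a subsequence with $x_{k_i}\rightharpoonup y$ gives $\ell=\langle y,\,y-x\rangle$. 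Subtracting yields $\langle y-x,\,y-x\rangle=0$, i.e. $x=y$.

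Third, I would invoke the standard fact that a bounded sequence in $\mathcal{H}$ with a unique weak sequential cluster point $x^\star$ converges weakly to $x^\star$: otherwise there would be a weak neighbourhood $V$ of $x^\star$ and a subsequence lying outside $V$; being bounded, that subsequence would admit a further subsequence converging weakly, necessarily to a weak sequential cluster point of $\{x_k\}_{k\in\mathbb{N}}$, which must equal $x^\star\in V$, a contradiction. Hence $x_k\rightharpoonup x^\star$, and since $x^\star$ is a weak sequential cluster point we have $x^\star\in D$ by (ii), which is the assertion.

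The only genuine subtlety — and thus the main obstacle — is the uniqueness step: it is precisely there that hypothesis (i) (convergence of the distances $\|x_k-x\|$ for each $x\in D$) is coupled with the inner-product structure through the identity displayed above, forcing two candidate limits to coincide. Hypothesis (ii) plays only the auxiliary role of guaranteeing that the (a priori existing) cluster points lie in $D$; the remaining ingredients are soft functional analysis, namely reflexivity and the weak sequential compactness of bounded subsets of $\mathcal{H}$.
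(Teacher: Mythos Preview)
Your argument is correct and is precisely the classical proof of Opial's lemma. Note, however, that the paper does not supply its own proof of this statement: Lemma~\ref{lem1} is merely quoted from \cite{BC2011} as a preliminary result, so there is no in-paper proof to compare against. What you have written is essentially the proof one finds in \cite{BC2011} (Lemma~2.47 there), so nothing further is needed.
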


\begin{lemma}{\rm(\cite{Comettes}\ )} \label{Comettes}	
Let $F$ be a nonempty closed subset of a separable real Hilbert space $\mathcal{K}$ and  $\phi:[0,+\infty) \rightarrow [0,+\infty)$ be a strictly increasing function such that $\lim_{t \rightarrow +\infty}\phi(t)=+\infty$. Let $\{x^k\}_{k \in \mathbb{N}}$ be a sequence of $\mathcal{K}$-valued random variables and  $\chi_k=\sigma(x^0,\cdots,x^k), \forall k \in \mathbb{N}.$  Suppose that, for every $z \in F$, there exist $\{\beta_k(z)\}_{k\in\mathbb{N}}$, $\{\xi_k(z)\}_{k\in\mathbb{N}}$, and $\{\zeta_k(z)\}_{k\in\mathbb{N}}$ be nonnegative $\chi_k$-measurable random variables such that $\sum_{k=0}^{\infty} \beta_k(z) < \infty$, $\sum_{k=0}^{\infty} \xi_k(z) < \infty$ and
\begin{equation*}
\mathbb{E}(\phi(\|x^{k+1}-z\|) |  \chi_k) \leq (1+\beta_k(z))\phi(\|x^k-z\|)+\xi_k(z)-\zeta_k(z),\quad\forall k\in\mathbb{N}.
\end{equation*}
Then the following hold:\vskip 1mm
\noindent
{\rm (i)}  $(\forall z \in F),$ $\sum_{k=0}^{\infty} \zeta_k(z) < \infty$ almost surely. \vskip 1mm
\noindent
{\rm (ii)} There exists $\Xi \in \mathcal{F}$ such that $P(\Xi)=1$, for every $\theta \in \Xi$ and every $z \in F$, $\{\|x^k(\theta)-z\|\}_{k\in \mathbb{N}}$ converges.  \vskip 1mm
\noindent
{\rm (iii)} Suppose that all weak cluster points of $\{x^k\}_{k \in \mathbb{N}}$ belong to $F$ almost surely, then $\{x^k\}_{k \in \mathbb{N}}$ converges weakly almost surely to an $F$-valued random variable.
\end{lemma}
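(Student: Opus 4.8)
The plan is to deduce the statement from a pointwise (in $z$) supermartingale convergence argument, then to upgrade the resulting almost sure assertions so that they hold simultaneously over all of $F$ by exploiting separability, and finally to invoke the deterministic weak convergence principle of Lemma~\ref{lem1} together with Pettis' measurability theorem. First I would fix $z\in F$ and set $Y_k=\phi(\|x^k-z\|)\ge 0$, a $\chi_k$-measurable random variable. The hypothesis then reads $\mathbb{E}(Y_{k+1}\mid\chi_k)\le(1+\beta_k(z))Y_k+\xi_k(z)-\zeta_k(z)$ with $\sum_k\beta_k(z)<\infty$ and $\sum_k\xi_k(z)<\infty$. The Robbins--Siegmund theorem for almost supermartingales then gives, almost surely, that $\sum_k\zeta_k(z)<\infty$ and that $Y_k$ converges to an almost surely finite limit. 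Since this applies to each $z\in F$ (with a null exceptional set possibly depending on $z$), assertion (i) follows at once. Moreover, because $\phi$ is strictly increasing with $\phi(t)\to+\infty$ as $t\to+\infty$, boundedness of $\{Y_k\}_k$ forces boundedness of $\{\|x^k-z\|\}_k$ and convergence of $\{Y_k\}_k$ forces convergence of $\{\|x^k-z\|\}_k$; in particular $\{x^k\}$ is almost surely bounded.

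To obtain a single full-measure event valid for every $z\in F$ (assertion (ii)), I would use that $F$, being a closed subset of the separable Hilbert space $\mathcal{K}$, is itself separable; fix a countable dense subset $\{z_j\}_{j\in\mathbb{N}}\subseteq F$. For each $j$ there is a null set off which both $\sum_k\zeta_k(z_j)<\infty$ and $\{\|x^k-z_j\|\}_k$ converges; let $\Xi$ be the complement of the (countable) union of these null sets, so $P(\Xi)=1$. For arbitrary $z\in F$ and $\theta\in\Xi$, the reverse triangle inequality gives $\bigl|\,\|x^k(\theta)-z\|-\|x^k(\theta)-z_j\|\,\bigr|\le\|z-z_j\|$, hence $\limsup_k\|x^k(\theta)-z\|-\liminf_k\|x^k(\theta)-z\|\le 2\|z-z_j\|$ for every $j$; letting $z_j\to z$ shows that $\{\|x^k(\theta)-z\|\}_k$ converges, which is exactly (ii).

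For (iii), assume that outside a null set $N'$ every weak sequential cluster point of $\{x^k(\theta)\}_k$ lies in $F$, and work on the full-measure event $\Xi\setminus N'$. For such $\theta$, the bounded sequence $\{x^k(\theta)\}_k$ has weak sequential cluster points, all lying in $F$, while $\lim_k\|x^k(\theta)-z\|$ exists for every $z\in F$ by (ii); Lemma~\ref{lem1} with $D=F$ then yields $x^k(\theta)\rightharpoonup x(\theta)$ for some $x(\theta)\in F$. Finally, $\theta\mapsto\langle x(\theta),u\rangle=\lim_k\langle x^k(\theta),u\rangle$ is measurable for every $u\in\mathcal{K}$, so $x$ is weakly measurable, and by separability of $\mathcal{K}$ and Pettis' theorem it is therefore (strongly) measurable; thus $x$ is an $F$-valued random variable and $x^k\rightharpoonup x$ almost surely.

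I expect the only genuinely delicate points to be the null-set bookkeeping required to pass from a countable dense subset of $F$ to all of $F$ in (ii), and the measurability of the weak limit via Pettis' theorem in (iii); the supermartingale estimate itself is entirely routine once $Y_k=\phi(\|x^k-z\|)$ is identified as the right Lyapunov quantity, and the deterministic step reduces immediately to Lemma~\ref{lem1}.
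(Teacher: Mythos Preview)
The paper does not prove this lemma; it is quoted verbatim from \cite{Comettes} as a known tool, so there is no ``paper's own proof'' to compare against. Your argument is correct and is essentially the standard one given in the original source: Robbins--Siegmund for each fixed $z$ to get (i) and pointwise convergence of $\phi(\|x^k-z\|)$, a separability/density argument to pass to a single full-measure event for (ii), and then the deterministic Opial-type principle (Lemma~\ref{lem1}) plus Pettis' theorem for (iii). One small remark: your passage from convergence of $\phi(\|x^k-z\|)$ to convergence of $\|x^k-z\|$ uses only strict monotonicity and $\phi(t)\to\infty$, not continuity of $\phi$; it would be worth spelling out the two-line argument (if $\liminf t_k<\limsup t_k$, any $c$ strictly between them would satisfy $\phi(c)=L$, contradicting strict monotonicity), since a careless reader might think continuity is needed.
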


\section{Nonlinear forward-backward-half forward with momentum}	
In this section,  we propose an algorithmic framework for solving the monotone inclusion problem \eqref{ABC} via a nonlinear forward-backward-half forward splitting algorithm with momentum. We begin by stating the following assumptions:
			\begin{assumption}
				\label{assumption1}
				
				Assume the following conditions hold:
				\begin{itemize}
					\item[{\rm(i)}]  $A:\mathcal{H} \rightarrow 2^{\mathcal{H}}$ is  maximally monotone.
					
					\item[{\rm(ii)}]   $B:\mathcal{H} \rightarrow \mathcal{H}$ is a single-valued monotone and $\mu$-Lipschitz continuous w.r.t. $S$, where $S \in \mathcal{P}(\mathcal{H})$.

					\item[{\rm(iii)}]  $C:\mathcal{H}\rightarrow \mathcal{H}$ is $\beta^{-1}$-cocoercive w.r.t. $S$, where $S \in \mathcal{P}(\mathcal{H})$, for some $\beta>0$.
					
					\item[{\rm(iv)}]   The solution set of problem \eqref{ABC}, defined by
					$$
					\zer(A+B+C):=\{x \in \mathcal{H}:0 \in Ax+Bx+Cx\}
					$$
					is nonempty.
                  \item[{\rm(v)}] The nonlinear kernel $M_k : \mathcal{H} \rightarrow \mathcal{H}$ satisfies the condition that  $\gamma_k M_k - S$ is $L_k$-Lipschitz continuous w.r.t. $S$, for some $L_k \in [0,1)$, $\gamma_k\geq\gamma,$ $\forall k \in \mathbb{N}$.
				\end{itemize}
			\end{assumption}

        The following result is taken from \cite{Morin}.
\begin{proposition}{\rm(\cite{{Morin}}\ )}
Suppose Assumption \ref{assumption1} (v) holds. Then, for each  $k \in \mathbb{N}$, the operator $M_k$ is $2\gamma^{-1}$-Lipschitz continuous w.r.t. $S$, maximally monotone, and strongly monotone w.r.t. $S$.
\end{proposition}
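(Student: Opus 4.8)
The plan is to extract all three conclusions from the single structural hypothesis in Assumption \ref{assumption1}(v) by writing $\gamma_k M_k = (\gamma_k M_k - S) + S$ and exploiting the fact that the linear operator $S$ acts as an isometry from $(\mathcal{H},\|\cdot\|_S)$ onto $(\mathcal{H},\|\cdot\|_{S^{-1}})$: indeed $\|Sx - Sy\|_{S^{-1}}^2 = \langle S^{-1}S(x-y), S(x-y)\rangle = \langle x-y, S(x-y)\rangle = \|x-y\|_S^2$. For the Lipschitz estimate I would then apply the triangle inequality in the $\|\cdot\|_{S^{-1}}$-norm to $\gamma_k M_k x - \gamma_k M_k y = \big[(\gamma_k M_k - S)x - (\gamma_k M_k - S)y\big] + S(x-y)$, which gives $\|\gamma_k M_k x - \gamma_k M_k y\|_{S^{-1}} \leq (L_k+1)\|x-y\|_S$; dividing by $\gamma_k$ and using $L_k < 1$ together with $\gamma_k \geq \gamma$ yields $\|M_k x - M_k y\|_{S^{-1}} \leq \frac{L_k+1}{\gamma_k}\|x-y\|_S \leq 2\gamma^{-1}\|x-y\|_S$, i.e. $M_k$ is $2\gamma^{-1}$-Lipschitz continuous w.r.t. $S$.

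For monotonicity I would expand $\langle \gamma_k M_k x - \gamma_k M_k y, x-y\rangle = \langle (\gamma_k M_k - S)x - (\gamma_k M_k - S)y, x-y\rangle + \langle S(x-y), x-y\rangle$. The second term equals exactly $\|x-y\|_S^2$, while for the first I would invoke the generalized Cauchy--Schwarz inequality $|\langle u,v\rangle| \leq \|u\|_{S^{-1}}\|v\|_S$ recalled in Section 2, combined with the $L_k$-Lipschitz hypothesis, to bound it below by $-L_k\|x-y\|_S^2$. This gives $\langle M_k x - M_k y, x-y\rangle \geq \frac{1-L_k}{\gamma_k}\|x-y\|_S^2$ with $\frac{1-L_k}{\gamma_k} > 0$; hence $M_k$ is strongly monotone w.r.t. $S$, and in particular monotone.

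It remains to upgrade monotonicity to maximal monotonicity. Here I would use that $M_k$ is single-valued and everywhere defined on $\mathcal{H}$, and that it is continuous --- being Lipschitz continuous w.r.t. $S$ and the norms $\|\cdot\|$ and $\|\cdot\|_S$ being equivalent --- so that the standard criterion (see, e.g., \cite{BC2011}) stating that an everywhere-defined, continuous, monotone operator on a Hilbert space is maximally monotone applies directly. The computations above are all routine; the only step that is not a direct estimate is this final appeal to the maximality criterion, so I do not expect any genuine obstacle beyond carefully tracking the $S$-weighted norms and the role of the bound $\gamma_k \geq \gamma$.
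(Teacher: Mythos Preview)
Your argument is correct in all three parts: the decomposition $\gamma_k M_k = (\gamma_k M_k - S) + S$ together with the isometry identity $\|Sv\|_{S^{-1}} = \|v\|_S$ cleanly yields the Lipschitz bound $(L_k+1)/\gamma_k \leq 2\gamma^{-1}$ and the strong monotonicity constant $(1-L_k)/\gamma_k > 0$, and the appeal to the standard ``continuous $+$ monotone $+$ full domain $\Rightarrow$ maximally monotone'' criterion from \cite{BC2011} is exactly the right way to close. Note, however, that the paper does not supply its own proof of this proposition at all: it simply quotes the result from \cite{Morin}. Your write-up is therefore not being compared against an in-paper argument but rather fills in what the authors chose to import; the approach you give is the natural one and matches what one finds in the cited source.
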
		
				
\begin{algorithm}
\label{algorithm1}
				\hrule
				\vskip 1mm
				\noindent\textbf{\footnotesize{Nonlinear forward-backward-half forward with momentum}}
				\vskip 1mm
				\hrule
				
				\vskip 2mm
\noindent Let $S \in \mathcal{P}(\mathcal{H}),$ $M_k: \mathcal{H} \rightarrow \mathcal{H},$ $\gamma_k > 0 \ (\forall k \in \mathbb{N})$ and initial point $x_{0}, \ u_{0} \in \mathcal{H}$. For $k=0,1,2,\cdots$, do
\begin{equation}
\label{algorithm_1}
\aligned
&y_k=(M_k+A)^{-1}(M_kx_k-(B+C)x_k+\gamma_k^{-1}u_k), \\
&x_{k+1}=y_k-\gamma_kS^{-1}By_k+\gamma_kS^{-1}Bx_k, \\
&u_{k+1}=(\gamma_kM_k-S)y_k-(\gamma_kM_k-S)x_k.
\endaligned
\end{equation}
\hrule
\hspace*{\fill}
\end{algorithm}

\subsection{Weak convergence}
 In this subsection, we present the convergence analysis for Algorithm \ref{algorithm1}. Let $\{x_k\}_{k \in \mathbb{N}}$ and $\{y_k\}_{k \in \mathbb{N}}$ be the sequences generated by Algorithm \ref{algorithm1}. For any $ x \in \zer(A+B+C)$, we define the function
\begin{equation}\label{Phik}
\Phi_{k}(x):=\|x_k-x\|_{S}^2+2\langle u_k, x_k-x\rangle +L_{k-1}\|y_{k-1}-x_{k-1}\|_S^2,
\end{equation}
which helps to establish the weak convergence of the Algorithm \ref{algorithm1}.		

			\begin{lemma}
				\label{lemma}
				Suppose that Assumption {\rm\ref{assumption1}} holds. Let $\{x_k\}_{k \in \mathbb{N}}$, $\{y_k\}_{k \in \mathbb{N}}$, and $\{u_k\}_{k \in \mathbb{N}}$ be the sequences generated by Algorithm \ref{algorithm1}. Then, for any $x^* \in \zer(A+B+C)$ and all $k \in \mathbb{N}_0$, the following inequality holds:
\begin{equation}
\Phi_{k+1}(x^*) \leq \Phi_{k}(x^*)-(1- L_{k-1}-L_k-2\gamma_kL_k\mu-\gamma_k^2\mu^2-\frac{\gamma_k\beta}{2})\|y_k-x_k\|_S^2. \label{1}
\end{equation}
			\end{lemma}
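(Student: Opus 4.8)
The plan is to track the evolution of the Lyapunov functional $\Phi_k(x^*)$ by exploiting the three update relations in \eqref{algorithm_1} together with the monotonicity of $A$, the monotonicity and Lipschitz continuity of $B$, and the cocoercivity of $C$. First I would rewrite the first line of \eqref{algorithm_1} as an inclusion: $M_kx_k-(B+C)x_k+\gamma_k^{-1}u_k-M_ky_k \in Ay_k$. Since $x^*\in\zer(A+B+C)$, we have $-(B+C)x^*\in Ax^*$, so monotonicity of $A$ applied to the pairs $(y_k,\cdot)$ and $(x^*,\cdot)$ yields
\begin{equation*}
\langle M_kx_k-M_ky_k-(B+C)x_k+(B+C)x^*+\gamma_k^{-1}u_k,\, y_k-x^*\rangle\geq 0.
\end{equation*}
Next I would substitute $u_{k+1}=(\gamma_kM_k-S)y_k-(\gamma_kM_k-S)x_k$, i.e. $\gamma_k^{-1}u_{k+1}=M_ky_k-M_kx_k-\gamma_k^{-1}S(y_k-x_k)$, to re-express the term $M_kx_k-M_ky_k$ and to make $u_{k+1}$ appear; simultaneously I would use the second line $x_{k+1}=y_k-\gamma_kS^{-1}(By_k-Bx_k)$, equivalently $S(x_{k+1}-y_k)=-\gamma_k(By_k-Bx_k)$, to relate $x_{k+1}$ to $y_k$ in the $S$-inner product. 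The goal is to convert everything into expressions of the form $\|\cdot-x^*\|_S^2$ differences via the polarization identity $2\langle x-y\mid y-z\rangle_S=\|x-z\|_S^2-\|x-y\|_S^2-\|y-z\|_S^2$.

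After these substitutions, the $A$-monotonicity inequality should become, roughly,
\begin{equation*}
\langle S(x_k-y_k),y_k-x^*\rangle+\langle u_{k+1}-u_k,y_k-x^*\rangle \ \geq\ \langle (B+C)x_k-(B+C)x^*,y_k-x^*\rangle,
\end{equation*}
after clearing $\gamma_k$. The first bracket on the left is handled by polarization in $\|\cdot\|_S$, producing $\|x_k-x^*\|_S^2-\|y_k-x^*\|_S^2-\|x_k-y_k\|_S^2$ up to a factor; combined with $x_{k+1}=y_k-\gamma_kS^{-1}(By_k-Bx_k)$ I can replace $\|y_k-x^*\|_S^2$ by $\|x_{k+1}-x^*\|_S^2$ plus cross terms $2\gamma_k\langle Bx_k-By_k,x_{k+1}-x^*\rangle$ and $\gamma_k^2\|Bx_k-By_k\|_{S^{-1}}^2$. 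The inner-product term $\langle u_{k+1}-u_k,y_k-x^*\rangle$ must be split to build the $2\langle u_{k+1},x_{k+1}-x^*\rangle$ piece of $\Phi_{k+1}$; the discrepancy between $y_k$ and $x_{k+1}$ here contributes a term controlled by $\|u_{k+1}\|_{S^{-1}}$, and the key structural fact is that $\|u_{k+1}\|_{S^{-1}}=\|(\gamma_kM_k-S)y_k-(\gamma_kM_k-S)x_k\|_{S^{-1}}\leq L_k\|y_k-x_k\|_S$ by Assumption \ref{assumption1}(v), which is what produces the $L_k$ and $L_{k-1}$ terms in the bound (the $L_{k-1}$ term migrating from $\Phi_k$). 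For the right-hand side: the $B$ part is nonnegative by monotonicity of $B$ and can be dropped (or used to absorb a sign), while the Lipschitz bound $\|Bx_k-By_k\|_{S^{-1}}\le\mu\|x_k-y_k\|_S$ controls the cross terms, generating the $2\gamma_kL_k\mu$ and $\gamma_k^2\mu^2$ contributions; the $C$ part is where Lemma \ref{lem0} enters — applied with $x=x_k$, $y=x^*$, $z=y_k$ (or the analogous choice) it gives $\langle Cx_k-Cx^*,y_k-x^*\rangle\geq-\frac{\beta}{4}\|y_k-x_k\|_S^2$, and a second application or a symmetric manipulation yields the coefficient $\frac{\gamma_k\beta}{2}$.

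Collecting all the terms, grouping the $\|y_k-x_k\|_S^2$ coefficients and recognizing $\Phi_k(x^*)$ and $\Phi_{k+1}(x^*)$ on the two sides (with the $L_{k-1}\|y_{k-1}-x_{k-1}\|_S^2$ term of $\Phi_k$ being consumed to offset the Cauchy--Schwarz estimate on the $u_k$ cross term from the previous step, and the fresh $L_k\|y_k-x_k\|_S^2$ term appearing in $\Phi_{k+1}$) should deliver exactly \eqref{1}. The main obstacle I anticipate is bookkeeping the momentum/error term $u_k$: one must carefully apply Cauchy--Schwarz in the $(\|\cdot\|_{S^{-1}},\|\cdot\|_S)$ duality to the terms $\langle u_k,\cdot\rangle$ and $\langle u_{k+1},\cdot\rangle$ so that the resulting $L_{k-1}\|y_{k-1}-x_{k-1}\|_S^2$ and $L_k\|y_k-x_k\|_S^2$ quantities telescope correctly through the definition \eqref{Phik} of $\Phi_k$, rather than accumulating. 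A secondary delicate point is ensuring that when $x_{k+1}$ replaces $y_k$ inside the inner products with $u_{k+1}$, the extra terms of order $\gamma_k\mu L_k\|y_k-x_k\|_S^2$ are tracked with the right constants so the final coefficient matches $1-L_{k-1}-L_k-2\gamma_kL_k\mu-\gamma_k^2\mu^2-\frac{\gamma_k\beta}{2}$ precisely.
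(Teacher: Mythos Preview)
Your proposal is correct and follows essentially the same route as the paper: monotonicity of $A$ on the inclusion, substitution of $u_{k+1}$ via $(\gamma_kM_k-S)$, polarization in $\|\cdot\|_S$ together with $x_{k+1}=y_k-\gamma_kS^{-1}(By_k-Bx_k)$, Cauchy--Schwarz in the $(\|\cdot\|_{S^{-1}},\|\cdot\|_S)$ duality on the $u_k,u_{k+1}$ terms using the $L_{k-1},L_k$ Lipschitz bounds, Lemma~\ref{lem0} for $C$, and monotonicity plus Lipschitz of $B$. Two small clarifications: (i) only one application of Lemma~\ref{lem0} is needed---the coefficient $\frac{\gamma_k\beta}{2}$ arises because the whole inequality has already been multiplied by $2\gamma_k$; (ii) monotonicity of $B$ is not applied to $\langle Bx_k-Bx^*,y_k-x^*\rangle$ directly but only after the cross term $-2\gamma_k\langle x_{k+1}-x^*,By_k-Bx_k\rangle$ is unfolded via $x_{k+1}=y_k-\gamma_kS^{-1}(By_k-Bx_k)$, which converts the combined $B$-terms into $-2\gamma_k\langle By_k-Bx^*,y_k-x^*\rangle+\gamma_k^2\|By_k-Bx_k\|_{S^{-1}}^2$.
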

			
			\begin{proof}
From Algorithm \ref{algorithm1}, we have
$$
y_k=(M_k+A)^{-1}(M_kx_k-(B+C)x_k+\gamma_k^{-1}u_k),
$$
which is equivalent to the inclusion
				\begin{equation}
					M_kx_k-M_ky_k-(B+C)x_k+\gamma_k^{-1}u_k \in Ay_k.     \label{2}
				\end{equation}
				Since $x^* \in \zer(A+B+C)$, we have
				\begin{equation}
					-(B+C)x^* \in Ax^*.                                 \label{3}
				\end{equation}
				By combining \eqref{2} and \eqref{3}, and using the monotonicity of $A$, we obtain				
                \begin{equation}
				0 \leq \langle M_kx_k-M_ky_k-(B+C)x_k+\gamma_k^{-1}u_k+(B+C)x^* , y_k-x^* \rangle. 					\label{4}
				\end{equation}
Multiplying both sides of \eqref{4} by $2\gamma_k$ and using the definition of $u_{k+1}$, we have
\begin{equation}
\aligned
0& \leq 2\langle Sx_k-Sy_k+u_k-u_{k+1}-\gamma_k(B+C)x_k+\gamma_k(B+C)x^* , y_k-x^* \rangle\\
& = 2\langle Sx_k-Sy_k, y_k-x^* \rangle +2\langle u_k-u_{k+1},y_k-x^* \rangle-2\gamma_k \langle Bx_k-Bx^*,y_k-x^*\rangle \\
& \quad -2\gamma_k\langle Cx_k-Cx^*,y_k-x^*\rangle. 	
\endaligned
\label{5}
\end{equation}
We now analyze the four inner products appearing on the right-hand side of inequality \eqref{5}. Recalling the definition of $x_{k+1}$, we have
				\begin{equation}
					\begin{aligned}
					   & 2\langle Sx_k -Sy_k, y_k-x^*\rangle \\
                       =&\|x_k-x^*\|_S^2-\|y_k-x^*\|_S^2-\|y_k-x_k\|_S^2 \\
                       =&\|x_k-x^*\|_S^2-\|x_{k+1}+\gamma_kS^{-1}By_k-\gamma_kS^{-1}Bx_k-x^*\|_S^2-\|y_k-x_k\|_S^2 \\
                       =&\|x_k-x^*\|_S^2-\|x_{k+1}-x^*\|_S^2-2\gamma_k\langle x_{k+1}-x^*,By_k-Bx_k\rangle \\
                       &-\gamma_k^2\|By_k-Bx_k\|_{S^{-1}}^2-\|y_k-x_k\|_S^2, \\
					\end{aligned}  \label{7}
				\end{equation}
				and
				\begin{equation}
					\begin{aligned}
					   & 2\langle u_k-u_{k+1}, y_k-x^*\rangle \\
                       =& 2\langle u_k,x_k-x^*\rangle+2\langle u_k,y_k-x_k\rangle-2\langle u_{k+1},x_{k+1}-x^*\rangle \\
                       & -2\gamma_k\langle u_{k+1},S^{-1}By_k-S^{-1}Bx_k\rangle. \\
                       \leq &2\langle u_k,x_k-x^*\rangle+2\|u_k\|_{S^{-1}}\|y_k-x_k\|_S-2\langle u_{k+1},x_{k+1}-x^*\rangle \\
                       & +2\gamma_k\|u_{k+1}\|_{S^{-1}}\|By_k-Bx_k\|_{S^{-1}} \\
                       \leq&  2\langle u_k,x_k-x^*\rangle-2\langle u_{k+1},x_{k+1}-x^*\rangle+L_{k-1}\|y_{k-1}-x_{k-1}\|_S^2 \\
                       & +L_{k-1}\|y_{k}-x_{k}\|_S^2+2\gamma_kL_k\mu\|y_k-x_k\|_S^2. \\
					\end{aligned}  \label{8}
				\end{equation}
By Lemma \ref{lem0}, we get
				\begin{equation}
					-2\gamma_k\langle Cx_k-Cx^* , y_k-x^* \rangle \leq \frac{\gamma_k\beta}{2}\|y_k-x_k\|_S^2. \label{6}
				\end{equation}
				Combining \eqref{7}, \eqref{8},  and \eqref{6}, we deduce that
				\begin{equation}
					\begin{aligned}
						& \quad\|x_{k+1}-x^*\|_S^2+2\langle u_{k+1}, x_{k+1}-x^*\rangle\\
						& \leq \|x_k-x^*\|_S^2 +2\langle u_k,x_k-x^*\rangle+L_{k-1}\|y_{k-1}-x_{k-1}\|_S^2 \\
                        & \quad-2\gamma_k \langle Bx_k-Bx^*,y_k-x^*\rangle-2\gamma_k\langle x_{k+1}-x^*,By_k-Bx_k\rangle-\gamma_k^2\|By_k-Bx_k\|_{S^{-1}}^2 \\
                        &  \quad-\|y_k-x_k\|_S^2+L_{k-1}\|y_{k}-x_{k}\|_S^2+2\gamma_kL_k\mu\|y_k-x_k\|_S^2+\frac{\gamma_k\beta}{2}\|y_k-x_k\|_S^2\\
                        & = \|x_k-x^*\|_S^2 +2\langle u_k,x_k-x^*\rangle+L_{k-1}\|y_{k-1}-x_{k-1}\|_S^2 \\
                        & \quad-2\gamma_k \langle Bx_k-Bx^*,y_k-x^*\rangle-2\gamma_k\langle y_k-\gamma_kS^{-1}By_k+\gamma_kS^{-1}Bx_k-x^*,By_k-Bx_k\rangle \\
                        & \quad-\gamma_k^2\|By_k-Bx_k\|_{S^{-1}}^2 -(1-L_{k-1}-2\gamma_kL_k\mu-\frac{\gamma_k\beta}{2})\|x_k-y_k\|_S^2 \\
                        & =\|x_k-x^*\|_S^2 +2\langle u_k,x_k-x^*\rangle+L_{k-1}\|y_{k-1}-x_{k-1}\|_S^2 \\
                        & \quad -2\gamma_k \langle By_k-Bx^*,y_k-x^*\rangle+\gamma_k^2\|By_k-Bx_k\|_{S^{-1}}^2\\
                        & \quad -(1-L_{k-1}-2\gamma_kL_k\mu-\frac{\gamma_k\beta}{2})\|x_k-y_k\|_S^2.
					\end{aligned} 					\label{9}
				\end{equation}
Applying the monotonicity of $B$ and rearranging the terms yields
\begin{equation}
\begin{aligned}
& \quad\|x_{k+1}-x^*\|_S^2+2\langle u_{k+1}, x_{k+1}-x^*\rangle+L_{k}\|y_{k}-x_{k}\|_S^2 \\
& \leq \|x_k-x^*\|_S^2 +2\langle u_k,x_k-x^*\rangle+L_{k-1}\|y_{k-1}-x_{k-1}\|_S^2 \\
& \quad -(1-L_{k-1}-L_k-2\gamma_kL_k\mu-\gamma_k^2\mu^2-\frac{\gamma_k\beta}{2})\|x_k-y_k\|_S^2.
\end{aligned} 					\label{10}
\end{equation}
Then \eqref{1} follows from the definition of $\Phi_k(x)$ given in \eqref{Phik}.
			\end{proof}
			
			\begin{theorem}
				\label{theorem1}
				{
					\noindent
Suppose that Assumption {\rm\ref{assumption1}} holds, and there exists a constant $\epsilon >0$ such that
\begin{equation}
1-L_{k-1}-L_k-2\gamma_kL_k\mu-\gamma_k^2\mu^2-\frac{\gamma_k\beta}{2} \geq \epsilon,	\label{11}
\end{equation}
holds for all $k \in \mathbb{N}_0$. Then the sequence $\{x_k\}_{k \in \mathbb{N}}$ generated by Algorithm \ref{algorithm1} converges weakly to a point in $\zer(A+B+C)$.
				}
			\end{theorem}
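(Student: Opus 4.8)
The plan is to invoke Lemma \ref{lem1} (Opial's lemma), so the argument splits into establishing the two hypotheses: (i) $\lim_k \|x_k - x^*\|_S$ exists for every $x^* \in \zer(A+B+C)$, and (ii) every weak sequential cluster point of $\{x_k\}$ lies in $\zer(A+B+C)$. The subtlety is that Lemma \ref{lemma} controls the auxiliary quantity $\Phi_k(x^*)$ rather than $\|x_k - x^*\|_S^2$ directly, so the first task is to convert the Fej\'er-type decay of $\Phi_k$ into genuine control of $\|x_k - x^*\|_S$. From \eqref{1} and \eqref{11}, $\Phi_{k+1}(x^*) \leq \Phi_k(x^*) - \epsilon\|y_k - x_k\|_S^2$, so $\{\Phi_k(x^*)\}$ is nonincreasing, hence bounded above, and $\sum_k \|y_k - x_k\|_S^2 < \infty$, which in particular forces $\|y_k - x_k\|_S \to 0$.

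First I would show $\Phi_k(x^*)$ is bounded below and that $\{x_k\}$, $\{u_k\}$ are bounded. Using the Cauchy–Schwarz inequality induced by $S$, $|2\langle u_k, x_k - x^*\rangle| \leq 2\|u_k\|_{S^{-1}}\|x_k - x^*\|_S$, and from the update $u_{k+1} = (\gamma_k M_k - S)y_k - (\gamma_k M_k - S)x_k$ together with Assumption \ref{assumption1}(v), $\|u_{k+1}\|_{S^{-1}} \leq L_k \|y_k - x_k\|_S$. Since $L_k < 1$ and $\|y_k - x_k\|_S \to 0$, we get $\|u_k\|_{S^{-1}} \to 0$; combining this with a Young-type splitting of the cross term, one obtains $\Phi_k(x^*) \geq \tfrac12\|x_k - x^*\|_S^2 - (\text{summable/bounded corrections})$, which both bounds $\{x_k\}$ below-boundedly inside $\Phi_k$ and, since $\Phi_k(x^*)$ converges (monotone and bounded below), shows $\{\|x_k - x^*\|_S\}$ is bounded. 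Then, because $u_k \to 0$ strongly (in $S^{-1}$-norm) and $L_{k-1}\|y_{k-1}-x_{k-1}\|_S^2 \to 0$, the definition \eqref{Phik} gives $\lim_k \|x_k - x^*\|_S^2 = \lim_k \Phi_k(x^*)$, so $\lim_k \|x_k - x^*\|_S$ exists. This settles hypothesis (i).

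For hypothesis (ii), let $\bar x$ be a weak cluster point, say $x_{k_j} \rightharpoonup \bar x$. Since $\|y_k - x_k\|_S \to 0$ and norms induced by $S$ are equivalent to $\|\cdot\|$, we also have $y_{k_j} \rightharpoonup \bar x$. Now rewrite the inclusion \eqref{2} as $a_k := M_k x_k - M_k y_k - (B+C)x_k + \gamma_k^{-1}u_k \in A y_k$. I would add $(B+C)y_k$ to both sides to bring the graph of $A + B + C$ into play: $a_k - (B+C)x_k + (B+C)y_k + (B+C)y_k \in (A + B + C)y_k$ — more precisely, $M_k x_k - M_k y_k + \gamma_k^{-1}u_k - (B+C)x_k + (B+C)y_k \in (A+B+C)y_k$. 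The right-hand operator $A + B + C$ is maximally monotone (sum of a maximally monotone operator with full-domain monotone Lipschitz and cocoercive operators), hence its graph is weak–strong sequentially closed by Lemma \ref{lem3}. So it suffices to show the left-hand side tends to $0$ strongly along $k_j$: the term $\gamma_k^{-1}u_k \to 0$ since $\gamma_k \geq \gamma$ and $u_k \to 0$; the term $(B+C)y_k - (B+C)x_k \to 0$ by Lipschitz continuity of $B$ and $C$ (cocoercive $\Rightarrow$ Lipschitz) w.r.t. $S$ and $\|y_k - x_k\|_S \to 0$; and $M_k x_k - M_k y_k \to 0$ because $M_k$ is $2\gamma^{-1}$-Lipschitz w.r.t. $S$ by the Proposition cited after Assumption \ref{assumption1}, so $\|M_k x_k - M_k y_k\|_{S^{-1}} \leq 2\gamma^{-1}\|y_k - x_k\|_S \to 0$. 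Thus $0 \in (A+B+C)\bar x$, i.e. $\bar x \in \zer(A+B+C)$.

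I expect the main obstacle to be the first hypothesis: the quantity tracked by Lemma \ref{lemma} is $\Phi_k$, not $\|x_k - x^*\|_S^2$, so one must carefully extract from the boundedness of $\Phi_k$ and the summability of $\|y_k - x_k\|_S^2$ both the boundedness of $\{x_k\}$ and the vanishing of the correction terms $2\langle u_k, x_k - x^*\rangle$ and $L_{k-1}\|y_{k-1} - x_{k-1}\|_S^2$ — the former requires the bound $\|u_{k+1}\|_{S^{-1}} \leq L_k\|y_k - x_k\|_S$ with $L_k \in [0,1)$ to close a self-referential estimate (the $u$-term involves $x_k$, which one is trying to bound). Once that bootstrapping is done, the passage to the limit in the inclusion is routine given the Lipschitz bounds on $M_k$, $B$, $C$ and the maximal monotonicity of $A+B+C$. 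Finally, applying Lemma \ref{lem1} with $D = \zer(A+B+C)$ yields weak convergence of $\{x_k\}$ to a point in $\zer(A+B+C)$.
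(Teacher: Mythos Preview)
Your overall strategy matches the paper's: use Lemma \ref{lemma} together with condition \eqref{11} to obtain a telescoping inequality for $\Phi_k(x^*)$, deduce $\|y_k - x_k\|_S \to 0$, show $\lim_k \|x_k - x^*\|_S$ exists, and then pass to the limit in the inclusion using the Lipschitz bounds on $M_k$, $B$, $C$ and the weak--strong closedness of $\gra(A+B+C)$, finishing with Lemma \ref{lem1}.

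There is, however, a genuine circularity in how you establish that $\Phi_k(x^*)$ is bounded below. In your first paragraph you assert $\sum_k \|y_k - x_k\|_S^2 < \infty$ from the nonincreasing property alone, but telescoping $\Phi_{k+1} \leq \Phi_k - \epsilon\|y_k - x_k\|_S^2$ only yields $\epsilon\sum_{k=1}^N \|y_k - x_k\|_S^2 \leq \Phi_1 - \Phi_{N+1}$, which is useless without a lower bound on $\Phi_{N+1}$. You then propose to obtain that lower bound via $u_k \to 0$, which in turn rests on $\|y_k - x_k\|_S \to 0$ --- the very thing you are trying to justify. The ``bootstrapping'' you anticipate does not close.

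The paper breaks this loop cleanly and without any self-reference: from $\|u_k\|_{S^{-1}} \leq L_{k-1}\|y_{k-1}-x_{k-1}\|_S$ and AM--GM,
\[
2\langle u_k, x_k - x^*\rangle \;\geq\; -2\|u_k\|_{S^{-1}}\|x_k - x^*\|_S \;\geq\; -L_{k-1}\bigl(\|y_{k-1}-x_{k-1}\|_S^2 + \|x_k - x^*\|_S^2\bigr),
\]
and substituting into \eqref{Phik} the negative $L_{k-1}\|y_{k-1}-x_{k-1}\|_S^2$ term is exactly cancelled by the corresponding positive term already present in $\Phi_k$, giving $\Phi_k(x^*) \geq (1-L_{k-1})\|x_k - x^*\|_S^2 \geq 0$ outright. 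With this in hand, summability of $\|y_k - x_k\|_S^2$, the convergence $u_k \to 0$, the identification $\lim_k\Phi_k(x^*) = \lim_k\|x_k - x^*\|_S^2$, and your weak--strong graph closure argument all go through exactly as you describe and exactly as in the paper.
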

			
\begin{proof}
Using the definition of $\Phi_k(x)$ in \eqref{Phik}, the Lipschitz continuity of $\gamma_kM_k-S$ and the Cauchy-Schwarz inequality, we deduce that
\begin{equation}
\begin{aligned}
\Phi_{k}(x)&=\|x_k-x\|_{S}^2+2\langle u_k, x_k-x\rangle +L_{k-1}\|y_{k-1}-x_{k-1}\|_S^2 \\
& \geq \|x_k-x\|_{S}^2-L_{k-1}(\|y_{k-1}-x_{k-1}\|_S^2+\|x_k-x\|_S^2)+L_{k-1}\|y_{k-1}-x_{k-1}\|_S^2 \\
& = (1-L_{k-1})\|x_k-x\|_S^2\geq 0,
\end{aligned}  \label{12}
\end{equation}
which shows that the sequence $\{\Phi_{k}(x)\}_{k \in \mathbb{N}_0}$ is nonnegative.  Since it is also nonincreasing, it is convergent. Let $N \in \mathbb{N}$. Summing inequality \eqref{1} for $k = 1$ to $N$ yields
\begin{equation}
\begin{aligned}
\sum_{k=1}^N(1- L_{k-1}-L_k-2\gamma_kL_k\mu-\gamma_k^2\mu^2-\frac{\gamma_k\beta}{2})\|y_k-x_k\|_S^2 &\leq \Phi_{1}(x)- \Phi_{N+1}(x)\\
& \leq \Phi_{1}(x) < + \infty.
\end{aligned}  \label{13}
\end{equation}
Thanks to condition \eqref{11}, we obtain that $\lim\limits_{k\rightarrow \infty}\|y_k-x_k\|_S=0$. Therefore,
\begin{equation}
\lim_{k\rightarrow \infty}\Phi_k(x)=\lim_{k\rightarrow \infty}\|x_k-x\|_S^2,  \label{14}
\end{equation}
which implies that the sequence $\{x_k\}_{k \in \mathbb{N}}$ is bounded. The sequence $\{x_k\}_{k \in \mathbb{N}}$ admits at least one weakly convergent subsequence $\{x_{k_n}\}_{n \in \mathbb{N}}$. Without loss of generality, assume that $x_{k_n} \rightharpoonup \bar{x} \in \mathcal{H}$ as $n\rightarrow \infty$.
Define
\begin{equation}
\label{delta}
\Delta_k := M_kx_k-M_ky_k-(B+C)(x_k-y_k)+\gamma_k^{-1}u_k,
\end{equation}
Then, by \eqref{2}, we have $(y_k , \Delta_k) \in \gra(A+B+C)$ for all $k \in \mathbb{N}$. Since $M_k$ is $2\gamma^{-1}$-Lipschitz continuous w.r.t. $S$, $B$ is $\mu$-Lipschitz continuous w.r.t. $S$, and $C$ is $\beta^{-1}$-cocoercive w.r.t. $S$, if follows that
\begin{equation}
\aligned \| \Delta_k \|_{S^{-1}}
& \leq \|M_kx_k-M_ky_k\|_{S^{-1}}+\|(B+C)(x_k-y_k)\|_{S^{-1}}+\frac{1}{\gamma_k}\|u_k\|_{S^{-1}}  \\
& \leq \frac{2}{\gamma}\|x_k-y_k\|_S+(\mu+\beta)\|x_k-y_k\|_S+\frac{L_{k-1}}{\gamma_k}\|x_{k-1}-y_{k-1}\|_S.
\endaligned \label{16}
\end{equation}
From the estimates above, we concludate that $\Delta_{k_n}\rightarrow 0$. Furthermore, since $B$ has full domain, it follows from Corollary 25.5 (i) in \cite{BC2011} that the sum $A+B$ is maximally monotone. Moeover, by Lemma 2.1 in \cite{Showalter} and the assumption that $C$ is cocoercive w.r.t. $S$, it follows that $A+B+C$ is maximally monotone. Then, by Lemma \ref{lem3}, we have $(\bar{x} , 0) \in \gra(A+B+C)$, which implies that $\bar{x} \in \zer(A+B+C)$. Finally, applying Lemma \ref{lem1}, we conclude that  the sequence $\{x_k\}_{k \in \mathbb{N}}$ converges weakly to a point in $\zer(A+B+C)$.
\end{proof}

\begin{remark}
\rm We present four special cases of Algorithm \ref{algorithm1}.
\begin{itemize}
\item[{\rm (i)}] When $B=0$, we have $\mu=0$. In this case, Algorithm \ref{algorithm1} reduces to the nonlinear forward-backward splitting with momentum studied in \cite{Morin}, and the condition on $\gamma_k$ simplifies to $1-L_{k-1}-L_k-\frac{\gamma_k\beta}{2} \geq \epsilon$, which coincides with the step-size condition proposed therein.

\item[{\rm (ii)}] When $S=\Id$ and $M_k=\frac{\Id}{\gamma_k}$, we have $L_k=0$ and $u_k=0$.  In this case, Algorithm \ref{algorithm1} simplifies to the FBHF splitting algorithm \eqref{FBHF} with $X=\mathcal{H}$. Moreover, the constraint in \eqref{11} reduces to $2\gamma_{k}^{2}\mu^{2}+\gamma_{k}\beta \leq 2-\epsilon$, which is consistent with the upper bound of $\gamma_k$ given in \cite{FBHF}.

\item[{\rm (iii)}] When $C=0$, $S=\Id$, and $M_k=\frac{\Id}{\gamma_k}$, we again have $L_k=0$, $u_k=0$, and $\beta=0$. Then Algorithm \ref{algorithm1} reduces to the forward-backward-forward splitting algorithm introduced by Tseng \cite{Tseng}, and the step-size condition on $\gamma_k$ is recovered accordingly.

\item[{\rm (iv)}] Suppose $A=A_1+A_2$, where $A_1:\mathcal{H} \rightarrow 2^{\mathcal{H}}$ is maximally monotone, $A_2:\mathcal{H} \rightarrow 2^{\mathcal{H}}$ is $L$-Lipschitz continuous, and $A_1+A_2$ is maximally monotone. Further, assume that $S=\Id$, $\gamma_k\equiv \gamma$, and $M_k=\frac{\Id}{\gamma}-A_2$. Then Algorithm \ref{algorithm1} becomes
\begin{equation}
\label{4operator}
\left\{
\begin{array}{lr}
y_k=J_{\gamma A_1}(x_k-\gamma A_2x_k-\gamma Bx_k-\gamma Cx_k-\gamma( A_2y_{k-1}-A_2x_{k-1})), & \\
x_{k+1}=y_k-\gamma By_k+\gamma Bx_k. & \\
\end{array}
\right.
\end{equation}
In this case, $\gamma M_k-\Id$ is $\gamma L$-Lipschitz continuous. As a result, the condition  in \eqref{11} becomes
\begin{equation*}
1-2\gamma L-2\gamma^2L\mu-\gamma^2\mu^2-\frac{\gamma \beta}{2}  \geq \epsilon.
\end{equation*}
Algorithm \eqref{4operator} can thus be interpreted as a four-operator splitting scheme for finding a zero of the sum of a maximally monotone operator, two Lipschitz continuous operators, and a cocoercive operator.
\end{itemize}
\end{remark}

\subsection{Linear convergence}
In this subsection, we establish the $R$-linear convergence of the sequence $\{x_k\}_{k \in \mathbb{N}}$ generated by Algorithm \ref{algorithm1}, under the assumption that the operator $A$ is  strongly monotone.  An operator $A$ is said to be $\rho$-strongly monotone w.r.t. $S$ , for some $\rho >0$, if
\begin{equation}
 \langle u-v , x-y \rangle \geq \rho\|x-y\|_S^2, \quad  \forall(x,u)\in \gra (A) ,   \forall(y,v) \in \gra (A).
\end{equation}
If, instead, $B$ is $\rho$-strongly monotone, then one may equivalently shift the operator sum by a multiple of the identity operator, as follows
 $A+B=(A+\rho \Id)+(B- \rho \Id)$, without affecting maximal monotonicity. Therefore, strong monotonicity $B$ also implies $R$-linear convergence of the sequence, and the convergence proof proceeds analogously.
\begin{theorem}
\label{linear1}
{
\noindent
Let Assumption {\rm\ref{assumption1}} hold, and suppose that the operator $A$ is $\rho$-strongly monotone. Assume that there exist constants $\varepsilon_1, \varepsilon_2 >0$ such that $\gamma_k <\frac{1}{\varepsilon_1}$, and define
$$t=\min\left\{\frac{2\gamma_k m(1-\gamma_k \varepsilon_1)}{1+\frac{L_k}{ \varepsilon_2}}, \frac{\kappa+\nu}{L_k(\varepsilon_2+1)}\right\},$$
where $\kappa=1-L_{k-1}-L_k-2\gamma_kL_k\mu-\gamma_k^2\mu^2-\frac{\gamma_k\beta}{2}, \ \nu=2\gamma_k^2\rho\mu^2(\gamma_k-\frac{1}{\varepsilon_1})$.
Then the sequence $\{x_k\}_{k \in \mathbb{N}}$ generated by Algorithm \ref{algorithm1} converges $R$-linearly to a point $x^*$ in $\zer(A+B+C)$.
}
\end{theorem}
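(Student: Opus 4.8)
The plan is to repeat the energy estimate behind Lemma~\ref{lemma}, but now to harvest from the $\rho$-strong monotonicity of $A$ an extra decay term, and then to promote the resulting one-step estimate into a geometric contraction of the Lyapunov functional $\Phi_k(x^*)$. Concretely, the only place where mere monotonicity of $A$ is used in the proof of Lemma~\ref{lemma} is the passage from the inclusions \eqref{2}--\eqref{3} to \eqref{4}; using $\rho$-strong monotonicity w.r.t.\ $S$ instead upgrades \eqref{4} to $\rho\|y_k-x^*\|_S^2\le\langle\cdot\,,\cdot\rangle$ there. Carrying this through exactly as in that proof (multiply by $2\gamma_k$, then invoke \eqref{7}, \eqref{8}, \eqref{6} and the monotonicity of $B$ verbatim) yields the sharpened version of \eqref{1},
\[
\Phi_{k+1}(x^*)\le\Phi_k(x^*)-\kappa\,\|y_k-x_k\|_S^2-2\gamma_k\rho\,\|y_k-x^*\|_S^2,
\]
with $\kappa=1-L_{k-1}-L_k-2\gamma_kL_k\mu-\gamma_k^2\mu^2-\frac{\gamma_k\beta}{2}$ as before; the novelty is the additional control of $\|y_k-x^*\|_S$.

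The next step is the trick that produces the correction $\nu$. Instead of comparing $\|y_k-x^*\|_S$ with $\|x_k-x^*\|_S$, I would expand it through the update rule $y_k=x_{k+1}+\gamma_kS^{-1}(By_k-Bx_k)$; using the $S$-inner-product identities for $S^{-1}$, a Young's inequality with weight $\gamma_k\varepsilon_1$ (admissible since $\gamma_k<1/\varepsilon_1$) together with the Lipschitz bound $\|By_k-Bx_k\|_{S^{-1}}\le\mu\|y_k-x_k\|_S$ yields exactly
\[
2\gamma_k\rho\,\|y_k-x^*\|_S^2\ \ge\ 2\gamma_k\rho(1-\gamma_k\varepsilon_1)\,\|x_{k+1}-x^*\|_S^2+\nu\,\|y_k-x_k\|_S^2,
\]
where $\nu=2\gamma_k^2\rho\mu^2(\gamma_k-\frac1{\varepsilon_1})\le0$ is precisely the quantity in the statement. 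Feeding this into the previous display gives the clean one-step inequality
\[
\Phi_{k+1}(x^*)+(\kappa+\nu)\,\|y_k-x_k\|_S^2+2\gamma_k\rho(1-\gamma_k\varepsilon_1)\,\|x_{k+1}-x^*\|_S^2\ \le\ \Phi_k(x^*).
\]

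To conclude, I would bound $\Phi_{k+1}(x^*)$ from above in the style of \eqref{12}, using $\|u_{k+1}\|_{S^{-1}}\le L_k\|y_k-x_k\|_S$ and Young's inequality with weight $\varepsilon_2$, obtaining $\Phi_{k+1}(x^*)\le(1+\frac{L_k}{\varepsilon_2})\|x_{k+1}-x^*\|_S^2+(1+\varepsilon_2)L_k\|y_k-x_k\|_S^2$. Since $\kappa+\nu>0$ and $1-\gamma_k\varepsilon_1>0$ under the hypotheses, each of $\|x_{k+1}-x^*\|_S^2$ and $\|y_k-x_k\|_S^2$ can be dominated by $\big(\Phi_k(x^*)-\Phi_{k+1}(x^*)\big)$ divided by its coefficient $2\gamma_k\rho(1-\gamma_k\varepsilon_1)$, resp.\ $\kappa+\nu$; rearranging collapses everything to $\Phi_{k+1}(x^*)\le(1-t)\Phi_k(x^*)$, with the contraction factor $1-t$ governed by the two ratios $\frac{2\gamma_k\rho(1-\gamma_k\varepsilon_1)}{1+L_k/\varepsilon_2}$ (the first entry of the minimum, with $m=\rho$) and $\frac{\kappa+\nu}{L_k(\varepsilon_2+1)}$ displayed in the statement. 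Iterating this inequality (valid for all $k\in\mathbb{N}_0$) gives $\Phi_k(x^*)\le(1-t)^{k-1}\Phi_1(x^*)$; combining with the lower bound $(1-L_{k-1})\|x_k-x^*\|_S^2\le\Phi_k(x^*)$ from \eqref{12} and the equivalence of $\|\cdot\|_S$ with $\|\cdot\|$ yields $\|x_k-x^*\|\le C\,(\sqrt{1-t}\,)^{k}$, i.e.\ $R$-linear convergence. The limit is unambiguous: $\rho$-strong monotonicity of $A$ (with $B$, $C$ monotone) makes $A+B+C$ strongly monotone, hence $\zer(A+B+C)$ is a singleton $\{x^*\}$, which is the point produced by the argument.

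The step I expect to require the most care is the final one: one must route the momentum ``memory'' term $L_{k-1}\|y_{k-1}-x_{k-1}\|_S^2$ hidden inside $\Phi_k(x^*)$ against the two freshly produced nonnegative quantities in precisely the right proportion — this is exactly what fixes the two entries of the minimum defining $t$ — and at the same time verify that the hypotheses ($\gamma_k<1/\varepsilon_1$, together with $\gamma_k$ bounded below so that $\kappa+\nu$ stays positive, and the attendant bounds on $L_k$) make $t$ a genuine constant in $(0,1)$ uniformly in $k$. Everything preceding that is a routine reprise of the computations behind Lemma~\ref{lemma}.
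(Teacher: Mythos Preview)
Your approach is essentially the same as the paper's: both harvest the extra $2\gamma_k\rho\|y_k-x^*\|_S^2$ from strong monotonicity, lower-bound it via Young's inequality with weight $\varepsilon_1$ to produce the $\nu$ correction and the $2\gamma_k\rho(1-\gamma_k\varepsilon_1)\|x_{k+1}-x^*\|_S^2$ term, upper-bound $a_{k+1}$ (equivalently $\Phi_{k+1}(x^*)-\|x_{k+1}-x^*\|_S^2$) via Young with weight $\varepsilon_2$ and the Lipschitz bound on $u_{k+1}$, and then combine. The only cosmetic discrepancy is that the clean combination actually yields $(1+t)\Phi_{k+1}(x^*)\le\Phi_k(x^*)$ (multiply your upper bound on $\Phi_{k+1}$ by $t$ and use $t\le c_1$, $t\le c_2$ to absorb into the two nonnegative terms of your one-step inequality), hence a factor $\frac{1}{1+t}$ rather than $1-t$; this is exactly what the paper obtains and is immaterial for the $R$-linear conclusion. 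Your observation that $\zer(A+B+C)$ is a singleton is a nice addition the paper leaves implicit, and you correctly identify $m=\rho$ in the statement.
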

\begin{proof}
By applying the strong monotonicity of $A$ in Lemma \ref{lemma}, \eqref{4} yields the following inequality
\begin{equation}
 \rho \|y_k-x^*\|_S^2 \leq \langle M_kx_k-M_ky_k-(B+C)x_k+\gamma_k^{-1}u_k+(B+C)x^* , y_k-x^* \rangle. 					
\end{equation}
And the following inequality is propagated through the proof of Lemma \ref{lemma},
\begin{equation}
\label{100}
\begin{aligned}
& \quad\|x_{k+1}-x^*\|_S^2+2\langle u_{k+1}, x_{k+1}-x^*\rangle+L_{k}\|y_{k}-x_{k}\|_S^2 +2\gamma_k\rho\|y_k-x^*\|_S^2 \\
& \leq \|x_k-x^*\|_S^2 +2\langle u_k,x_k-x^*\rangle+L_{k-1}\|y_{k-1}-x_{k-1}\|_S^2 \\
& \quad -(1-L_{k-1}-L_k-2\gamma_kL_k\mu-\gamma_k^2\mu^2-\frac{\gamma_k\beta}{2})\|x_k-y_k\|_S^2.
\end{aligned} 					
\end{equation}
With the help of the Young's inequality ($2 \langle a,b \rangle \leq \varepsilon\|a\|^2 + \frac{1}{\varepsilon} \|b\|^2$ for all $a , b\in \mathcal{H},$ and  $\varepsilon>0$), we get
\begin{equation}
\label{101}
\begin{aligned}
&2\gamma_k\rho\|y_k-x^*\|_S^2 \\
= & 2\gamma_k\rho\|x_{k+1}+\gamma_kS^{-1}By_k-\gamma_kS^{-1}Bx_k-x^*\|_S^2 \\
= & 2\gamma_k\rho(\|x_{k+1}-x^*\|_S^2+2\gamma_k\langle x_{k+1}-x^*,By_k-Bx_k\rangle+\gamma_k^2\|By_k-Bx_k\|_{S^{-1}}^2) \\
\geq &  2\gamma_k\rho(1-\gamma_k\varepsilon_1)\|x_{k+1}-x^*\|_S^2+2\gamma_k^2\rho\mu^2(\gamma_k-\frac{1}{\varepsilon_1})\|y_k-x_k\|_{S}^2.
\end{aligned} 					
\end{equation}
Set
$$a_k=2\langle u_k,x_k-x^*\rangle+L_{k-1}\|y_{k-1}-x_{k-1}\|_S^2, $$
$$b_k=(\kappa+\nu)\|x_k-y_k\|_S^2.$$
Combining  \eqref{100} and \eqref{101} yields the following inequality
\begin{equation}
(1+2\gamma_k\rho(1-\gamma_k\varepsilon_1))\|x_{k+1}-x^*\|_S^2+a_{k+1}+b_k \leq \|x_{k}-x^*\|_S^2+a_k. \label{102}
\end{equation}
Using Lipschitzness of $u_{k+1}$, we have
\begin{equation}
\label{103}
\begin{aligned}
a_{k+1}
& = 2\langle u_{k+1},x_{k+1}-x^*\rangle+L_{k}\|y_{k}-x_{k}\|_S^2 \\
& \leq  \varepsilon_2L_k\|y_{k}-x_{k}\|_S^2+\frac{L_k}{\varepsilon_2}\|x_{k+1}-x^*\|_S^2+L_{k}\|y_{k}-x_{k}\|_S^2\\
& = L_k(\varepsilon_2+1)\|y_{k}-x_{k}\|_S^2+\frac{L_k}{\varepsilon_2}\|x_{k+1}-x^*\|^2_S.
\end{aligned} 					
\end{equation}
Then by the assumption that
$$t=\min\left\{\frac{2\gamma_k \rho(1-\gamma_k \varepsilon_1)}{1+\frac{L_k}{ \varepsilon_2}}, \frac{\kappa+\nu}{L_k(\varepsilon_2+1)}\right\},$$
we get
\begin{equation}
\label{104}
ta_{k+1} \leq  (2\gamma_k\rho(1-\gamma_k\varepsilon_1)-t)\|x_{k+1}-x^*\|^2_S+b_k.
\end{equation}
Combining \eqref{102} and \eqref{104}, we obtain
\begin{equation}
\begin{aligned}
\label{105}
(1+t)(\|x_{k+1}-x^*\|^2_S+a_{k+1}) &\leq  (1+2\gamma_k\rho(1-\gamma_k\varepsilon_1))\|x_{k+1}-x^*\|_S^2+a_{k+1}+b_k \\
& \leq \|x_{k}-x^*\|_S^2+a_k.
\end{aligned} 	
\end{equation}
On the other hand, by \eqref{12}, we know that there exists $k_1>0$ such that
\begin{equation}
\label{106}
\|x_{k+1}-x^*\|^2_S+a_{k+1} \geq (1-L_{k})\|x_{k+1}-x\|_S^2\geq k_1\|x_{k+1}-x\|_S^2.
\end{equation}
Therefore,
\begin{equation}
\begin{aligned}
\label{107}
k_1\|x_{k+1}-x^*\|^2_S & \leq \|x_{k+1}-x\|_S^2+a_{k+1}\\
&\leq \frac{\|x_{k}-x^*\|_S^2+a_k}{1+t}\leq \ldots \leq \frac{\|x_{1}-x^*\|_S^2+a_1}{(1+t)^k},
\end{aligned}
\end{equation}
which shows that the sequence $\{x_k\}_{k \in \mathbb{N}}$ generated by Algorithm \ref{algorithm1} converges $R$-linearly to a point $x^*$ in $\zer(A+B+C)$.
\end{proof}

\section{Stochastic variance-reduced nonlinear forward-backward-half forward with momentum}	
In this section, we mainly consider the structured monotone inclusion problem \eqref{PROB2} in a separable real Hilbert space, which is recalled below
\begin{equation*}
\mbox{find} \ x \in \mathcal{X}  \ \ \mbox{such that} \ 0\in Ax+\sum_{i=1}^N B_ix+Cx,
\end{equation*}
where $A: \mathcal{X} \rightarrow 2^{\mathcal{X}}$ is a maximally monotone operator, $C: \mathcal{X} \rightarrow \mathcal{X}$ is a $\frac{1}{\beta}$-cocoercive operator, and each $B_i: \mathcal{H} \rightarrow \mathcal{H}$ is a maximally monotone and $L_i$-Lipschitz operator.

Throughout this section, we assume access to an unbiased stochastic oracle $B_{\xi}$ satisfying $B(x)=\mathbb{E}[B_{\xi}(x)]$. We consider using the stochastic oracle $B_{\xi}$ in place of $B$ to perform the half forward step in Algorithm \ref{algorithm1}. This substitution leverages  the computational efficiency of stochastic approximations, thereby reducing the per-iteration cost while preserving the algorithm's convergence properties.
\begin{algorithm}
\label{algorithm5}
				\hrule
				\noindent\textbf{\footnotesize{Stochastic variance-reduced nonlinear FBHF with momentum}}
				\hrule
				
				\vskip 1mm
\noindent Let $S \in \mathcal{P}(\mathcal{X}),$ $M_k: \mathcal{X} \rightarrow \mathcal{X} \ (\forall k \in \mathbb{N}),$ $\gamma> 0$ and initial point $x_{0}, \ u_{0} \in \mathcal{X}.$ Probability $p \in (0, 1]$, probability distribution $Q$, $\lambda \geq 0$. For $k=0,1,2,\cdots$, do \\
\begin{equation}
\label{algorithm-5}
\aligned
&\bar{x}_k=\lambda x_k+(1-\lambda)\omega_k \\
&y_k=(M_k+A)^{-1}(M_k\bar{x}_k-(B+C)\omega_k+\gamma^{-1}u_k), \\
&u_{k+1}=(\gamma M_k-S)y_k-(\gamma M_k-S)\bar{x}_k. \\
&\hbox{Draw an index} \ \xi_k \ \hbox{according to} \ Q \\
&x_{k+1}=y_k-\gamma S^{-1}B_{\xi_k}y_k+\gamma S^{-1}B_{\xi_k}\omega_k, \\
&\omega_{k+1}=
\begin{cases}
x_{k+1},&\hbox{with probability} \,\,p\\
\omega_k,&\hbox{with probability} \,\,1-p\
\end{cases}
\endaligned
\end{equation}
\vskip 1mm			
\hrule
\hspace*{\fill}
\end{algorithm}
\begin{remark}{\rm
We introduce two fundamental types of stochastic oracles:
\begin{itemize}
\item[(i)] Uniform sampling: Define $B_{\xi}(x)=NB_i(x)$, where the sampling probability satisfies $P_{\xi}(i)=\Pro \{\xi=i\}=\frac{1}{N}$. In this case, the Lipschitz constant of the stochastic oracle is given by $\theta=\sqrt{N\sum_{i=1}^N L_i^2}$.
\item[(ii)] Importance sampling: Define $B_{\xi}(x)=\frac{1}{P_{\xi}(i)}B_i(x)$, where the sampling probability is chosen as $ P_{\xi}(i)=\Pro\{\xi=i\}=\frac{L_i}{\sum_{j=1}^N L_j}$. Under this scheme,  the corresponding Lipschitz constant becomes $\theta=\sum_{i=1}^N L_i$.
\end{itemize}}
\end{remark}

\subsection{The weak almost sure convergence}
In this subsection, we establish the weak almost sure convergence of Algorithm \ref{algorithm5} in a separable real Hilbert space. To facilitate the analysis, we introduce the following notation for conditional expectations: $$\mathbb{E}_k[\cdot]=\mathbb{E}[\cdot|\sigma(\xi_0,...,\xi_{k-1},w^{k})]$$
and
$$\mathbb{E}_{k+\frac{1}{2}}[\cdot]=\mathbb{E}[\cdot|\sigma(\xi_0,...,\xi_{k},w^{k})].$$

\begin{assumption} {\rm
	\label{assumption2}
	\begin{itemize}
		\item[(i)] The operator $A: \mathcal{X}\to 2^{\mathcal{X}}$ is maximally monotone;
        \item[(ii)] The operator $B$ admits an unbiased stochastic oracle $B_\xi$ such that $B(x)=\mathbb{E}[B_{\xi}(x)]$, and is $\theta$-Lipschitz in expectation w.r.t. $S$, for some $\theta >0$; that is
        \begin{equation}
        \label{lip}
        \mathbb{E}[\|B_{\xi}(u)-B_{\xi}(v)\|_{S^{-1}}^2] \leq \theta^2\|u-v\|_S^2, \quad \forall u,v \in \mathcal{X};
        \end{equation}
		\item[(iii)] $C:\mathcal{H}\rightarrow \mathcal{H}$ is $\beta^{-1}$-cocoercive w.r.t. $S$, where $S \in \mathcal{P}(\mathcal{X}),$ for some $\beta>0$.
		\item[(iv)] The solution set  of the problem \eqref{PROB2}, denoted by $\Omega^*$, is nonempty.
        \item[{\rm(v)}] The nonlinear kernel $M_k : \mathcal{X} \rightarrow \mathcal{X}$ satisfies  $\gamma M_k - S$ is $L_k$-Lipschitz continuous w.r.t. $S$, for some $L_k \geq 0$, $\gamma>0,$ $\forall k \in \mathbb{N}$.
	\end{itemize}}
\end{assumption}

For the iterates $\{x_k\}_{k \in \mathbb{N}}$, $\{\omega_k\}_{k \in \mathbb{N}}$, $\{y_k\}_{k \in \mathbb{N}}$, and $\{\bar{x}_k\}_{k \in \mathbb{N}}$ generated by Algorithm \ref{algorithm5}, and $\forall x \in \mathcal{X}$, we define the following Lyapunov-type function:
\begin{equation}
\label{Gammak}
\Gamma_{k}(x):=\lambda \|x_k-x\|_S^2+\frac{1-\lambda}{p}\|\omega_k-x\|_S^2+2\langle u_k, x_k-x\rangle +L_{k-1}\|y_{k-1}-\bar{x}_{k-1}\|_S^2,
\end{equation}
which plays a pivotal role in establishing the  weak almost sure convergence of the proposed algorithm.

The following Lemma is similar to Lemma \ref{lemma}. To ensure the completeness of
this paper, we provide a detailed proof.

\begin{lemma}
\label{lemma2}
{
\noindent
Suppose that Assumption \ref{assumption2} holds. Let  $\lambda \geq 0$, and $p \in (0,1]$. Then for $\{x_k\}_{k \in \mathbb{N}},$  $\{u_k\}_{k \in \mathbb{N}},$  $\{\omega_k\}_{k \in \mathbb{N}}$ and  $\{u_k\}_{k \in \mathbb{N}}$  generated by Algorithm \ref{algorithm5} and any $x^* \in \Omega^*$,
\begin{equation}
\begin{aligned}
\label{lypunov}
\mathbb{E}_k[\Gamma_{k+1}(x^*)]
\leq & \Gamma_{k}(x^*)-(\lambda-L_{k-1}-\gamma L_k\theta-\lambda(\gamma L_k\theta+L_k))\|y_k-x_k\|_S^2 \\ &-(1-\lambda-\gamma^2\theta^2-\frac{\gamma\beta}{2}-(1-\lambda)(\gamma L_k\theta+L_k))\|y_k-\omega_k\|_S^2
\end{aligned} 					
\end{equation}
holds for all $k \in \mathbb{N}_0.$
}
\end{lemma}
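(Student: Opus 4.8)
\textbf{Proof plan for Lemma \ref{lemma2}.}

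The plan is to mirror the deterministic argument of Lemma \ref{lemma}, but now working with $\bar{x}_k$ in place of $x_k$ wherever the resolvent step uses it, and inserting the conditional expectation $\mathbb{E}_k[\cdot]$ at the points where the stochastic oracle $B_{\xi_k}$ appears, exploiting unbiasedness $\mathbb{E}_k[B_{\xi_k}(\cdot)]=B(\cdot)$ and the Lipschitz-in-expectation bound \eqref{lip}. First I would rewrite the resolvent step in inclusion form: from the definition of $y_k$ in \eqref{algorithm-5} we get $M_k\bar{x}_k-M_ky_k-(B+C)\omega_k+\gamma^{-1}u_k\in Ay_k$. Pairing this with $-(B+C)x^*\in Ax^*$ and using monotonicity of $A$, then multiplying by $2\gamma$ and using the definition of $u_{k+1}$, produces the analogue of \eqref{5}:
\begin{equation*}
0\leq 2\langle S\bar{x}_k-Sy_k,y_k-x^*\rangle+2\langle u_k-u_{k+1},y_k-x^*\rangle-2\gamma\langle Bx^*-B\omega_k,y_k-x^*\rangle-2\gamma\langle C\omega_k-Cx^*,y_k-x^*\rangle.
\end{equation*}

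Next I would expand each of these four inner products. For the first, I substitute $y_k = x_{k+1}+\gamma S^{-1}B_{\xi_k}y_k-\gamma S^{-1}B_{\xi_k}\omega_k$ and use the three-point identity for $\|\cdot\|_S$, exactly as in \eqref{7}, which introduces the term $-\gamma^2\|B_{\xi_k}y_k-B_{\xi_k}\omega_k\|_{S^{-1}}^2$ and a cross term $-2\gamma\langle x_{k+1}-x^*,B_{\xi_k}y_k-B_{\xi_k}\omega_k\rangle$. For the second, I expand $u_k-u_{k+1}$ paired with $y_k-\bar{x}_k$ plus $\bar{x}_k-x^*$, then apply Cauchy–Schwarz in the $S,S^{-1}$ norms and the $L_k$-Lipschitz property of $\gamma M_k-S$ to bound $\|u_{k+1}\|_{S^{-1}}\leq L_k\|y_k-\bar{x}_k\|_S$ and similarly for $\|u_k\|_{S^{-1}}$; together with Young's inequality this yields the momentum/telescoping pieces $2\langle u_k,\bar{x}_k-x^*\rangle-2\langle u_{k+1},x_{k+1}-x^*\rangle$ together with $L_{k-1}$- and $L_k$-weighted squared-distance terms and a $\gamma L_k\theta$-weighted term coming from the cross term with $B_{\xi_k}y_k-B_{\xi_k}\omega_k$. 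The $C$-term is handled by Lemma \ref{lem0} applied with the triple $(\omega_k,x^*,y_k)$, giving $\frac{\gamma\beta}{2}\|y_k-\omega_k\|_S^2$, and the $B$-term vanishes in conditional expectation after using monotonicity of $B$ on $\langle By_k-Bx^*,y_k-x^*\rangle\geq0$ once the oracle cross term is replaced by $B$ via $\mathbb{E}_k$. Then I take $\mathbb{E}_k[\cdot]$ throughout: the unbiasedness kills the remaining oracle cross terms, \eqref{lip} converts $\mathbb{E}_k\|B_{\xi_k}y_k-B_{\xi_k}\omega_k\|_{S^{-1}}^2$ into $\theta^2\|y_k-\omega_k\|_S^2$, and the one-step update of $\omega_{k+1}$ gives $\mathbb{E}_k\|\omega_{k+1}-x^*\|_S^2=p\|x_{k+1}-x^*\|_S^2+(1-p)\|\omega_k-x^*\|_S^2$, which is precisely what makes the $\frac{1-\lambda}{p}\|\omega_k-x^*\|_S^2$ term in $\Gamma_k$ telescope correctly against $\lambda\|x_k-x^*\|_S^2$.

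The bookkeeping obstacle, and the step I expect to require the most care, is the split of $\|y_k-\bar{x}_k\|_S^2$. Since $\bar{x}_k=\lambda x_k+(1-\lambda)\omega_k$, one has $y_k-\bar{x}_k=\lambda(y_k-x_k)+(1-\lambda)(y_k-\omega_k)$, so by convexity of $\|\cdot\|_S^2$ (valid since $\lambda\in[0,1]$, which is implicit in the statement together with $p\in(0,1]$) $\|y_k-\bar{x}_k\|_S^2\leq\lambda\|y_k-x_k\|_S^2+(1-\lambda)\|y_k-\omega_k\|_S^2$. Every squared term appearing with a coefficient $L_{k-1}$, $L_k$, $\gamma L_k\theta$, $\gamma^2\theta^2$, or $\frac{\gamma\beta}{2}$ must be distributed into a $\|y_k-x_k\|_S^2$ part (weighted by $\lambda$) and a $\|y_k-\omega_k\|_S^2$ part (weighted by $1-\lambda$) so that collecting coefficients reproduces exactly the two bracketed quantities $\lambda-L_{k-1}-\gamma L_k\theta-\lambda(\gamma L_k\theta+L_k)$ and $1-\lambda-\gamma^2\theta^2-\frac{\gamma\beta}{2}-(1-\lambda)(\gamma L_k\theta+L_k)$; the $-\|y_k-\bar{x}_k\|_S^2$ coming from the three-point identity in the first inner product must likewise be split as $-\lambda\|y_k-x_k\|_S^2-(1-\lambda)\|y_k-\omega_k\|_S^2$ using the reverse convexity direction, which is again an equality up to the cross term $-2\lambda(1-\lambda)\langle y_k-x_k,y_k-\omega_k\rangle_S$ that one must check cancels or is absorbed. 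Once all coefficients are matched, substituting the definition \eqref{Gammak} of $\Gamma_{k+1}$ and $\Gamma_k$ and rearranging gives \eqref{lypunov}.
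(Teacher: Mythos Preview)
Your plan is essentially the paper's argument, but two of your loose ends are handled differently (and more cleanly) in the paper, and one of your proposed splits would not telescope with $\Gamma_k$ as written.

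First, the ``reverse convexity'' worry. You are right that splitting $-\|y_k-\bar{x}_k\|_S^2$ into $-\lambda\|y_k-x_k\|_S^2-(1-\lambda)\|y_k-\omega_k\|_S^2$ produces a leftover $+\lambda(1-\lambda)\|x_k-\omega_k\|_S^2$. What resolves it is that the companion term $\|\bar{x}_k-x^*\|_S^2$ (which your three-point identity also produces) expands, by the \emph{same} identity, as $\lambda\|x_k-x^*\|_S^2+(1-\lambda)\|\omega_k-x^*\|_S^2-\lambda(1-\lambda)\|x_k-\omega_k\|_S^2$, and the two $\lambda(1-\lambda)$ pieces cancel exactly. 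The paper avoids this bookkeeping altogether: in \eqref{24} it splits at the inner-product level, writing $2\langle y_k-\bar{x}_k,x^*-y_k\rangle_S=2\lambda\langle y_k-x_k,x^*-y_k\rangle_S+2(1-\lambda)\langle y_k-\omega_k,x^*-y_k\rangle_S$ and then applies the three-point identity to each summand separately, which yields $\lambda\|x_k-x^*\|_S^2-\lambda\|y_k-x_k\|_S^2+(1-\lambda)\|\omega_k-x^*\|_S^2-(1-\lambda)\|y_k-\omega_k\|_S^2$ as an exact equality with no cross term.

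Second, your proposed decomposition $y_k-x^*=(y_k-\bar{x}_k)+(\bar{x}_k-x^*)$ for the $u_k$ term gives $2\langle u_k,\bar{x}_k-x^*\rangle$, whereas $\Gamma_k$ carries $2\langle u_k,x_k-x^*\rangle$; the difference $2(1-\lambda)\langle u_k,\omega_k-x_k\rangle$ has no obvious control. The paper instead writes $y_k-x^*=(x_k-x^*)+(y_k-x_k)$ for the $u_k$ part (see \eqref{25}), which directly produces $2\langle u_k,x_k-x^*\rangle$ and leaves $2\langle u_k,y_k-x_k\rangle$ to be bounded by Young and the $L_{k-1}$-Lipschitz property---this is why the final inequality has the standalone $L_{k-1}$ in the $\|y_k-x_k\|_S^2$ coefficient rather than a $\lambda L_{k-1}$. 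Relatedly, the paper keeps $\|x_{k+1}-y_k\|_S^2$ intact in \eqref{24} and only bounds $\mathbb{E}_k\|x_{k+1}-y_k\|_S^2\le\gamma^2\theta^2\|y_k-\omega_k\|_S^2$ after taking conditional expectation; your route of expanding it into $-\gamma^2\|B_{\xi_k}y_k-B_{\xi_k}\omega_k\|_{S^{-1}}^2$ plus a cross term with $x_{k+1}-x^*$ is equivalent, but note that $x_{k+1}$ is $\xi_k$-dependent, so unbiasedness does not kill that cross term directly---you must first re-substitute $x_{k+1}=y_k+\gamma S^{-1}(B_{\xi_k}\omega_k-B_{\xi_k}y_k)$ to reduce it to $\langle y_k-x^*,\,\cdot\,\rangle$ before applying $\mathbb{E}_k$.
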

\begin{proof}
By Algorithm \ref{algorithm5}, we know that
$$y_k=(M_k+A)^{-1}(M_k\bar{x}_k-(B+C)x_k+\gamma^{-1}u_k),$$
which is equivalent to the inclusion
\begin{equation}
M_k\bar{x}_k-M_ky_k-(B+C)\omega_k+\gamma^{-1}u_k \in Ay_k.     \label{20}
\end{equation}
It follows from $x^* \in \Omega^*$ that
\begin{equation}
-(B+C)x^* \in Ax^*.          \label{21}
\end{equation}
Combining \eqref{20}, \eqref{21}, and the monotonicity of $A$, we get 				
\begin{equation}
0 \leq \langle M_k\bar{x}_k-M_ky_k-(B+C)\omega_k+\gamma^{-1}u_k+(B+C)x^* , y_k-x^* \rangle. 				\label{22}
\end{equation}
Multiplying both sides by $2\gamma$ and using the definition of $u_{k+1},$ we obtain
\begin{equation}
\aligned
0& \leq 2\langle S\bar{x}_k-Sy_k+u_k-u_{k+1}-\gamma(B+C)\omega_k+\gamma(B+C)x^* , y_k-x^* \rangle\\
& = 2\langle S\bar{x}_k-Sy_k, y_k-x^* \rangle +2\langle u_k-u_{k+1},y_k-x^* \rangle-2\gamma \langle B\omega_k-Bx^*,y_k-x^*\rangle \\
& \quad -2\gamma\langle C\omega_k-Cx^*,y_k-x^*\rangle. 	
\endaligned
\label{23}
\end{equation}
Now, let us analyze the last four inner products. From the definition of $x_{k+1}$, we have
\begin{equation}
\begin{aligned}
& 2\langle S\bar{x}_k -Sy_k, y_k-x^*\rangle \\
=&2\langle \bar{x}_k -x_{k+1}-\gamma S^{-1}B_{\xi_k}y_k+\gamma S^{-1}B_{\xi_k}\omega_k , y_k-x^*\rangle_S \\
=&2\langle \bar{x}_k -x_{k+1}, y_k-x^*\rangle_S+2\gamma\langle B_{\xi_k}\omega_k-B_{\xi_k}y_k , y_k-x^*\rangle\\
=& 2\langle x_{k+1}-y_k, x^*-y_k \rangle + 2\langle y_k-\bar{x}_k, x^*-y_k \rangle +2\gamma\langle B_{\xi_k}\omega_k-B_{\xi_k}y_k , y_k-x^*\rangle\\
=& \|x_{k+1}-y_k\|^2+\|x^*-y_k\|^2-\|x_{k+1}-x^*\|^2+ 2\lambda\langle y_k-x_k , x^*-y_k \rangle \\
&+ 2(1-\lambda)\langle y_k-\omega_k , x^*-y_k \rangle  +2\gamma\langle B_{\xi_k}\omega_k-B_{\xi_k}y_k , y_k-x^*\rangle\\
=& \|x_{k+1}-y_k\|^2+\|x^*-y_k\|^2-\|x_{k+1}-x^*\|^2+ \lambda(\|x_k-x^*\|^2-\|y_k-x_k\|^2-\|y_k-x^*\|^2)\\
& +(1-\lambda)(\|\omega_k-x^*\|^2-\|y_k-\omega_k\|^2-\|y_k-x^*\|^2)+2\gamma\langle B_{\xi_k}\omega_k-B_{\xi_k}y_k , y_k-x^*\rangle\\
=&\|x_{k+1}-y_k\|^2-\|x_{k+1}-x^*\|^2+ \lambda\|x_k-x^*\|^2-\lambda\|y_k-x_k\|^2 +(1-\lambda)\|\omega_k-x^*\|^2 \\
&-(1-\lambda)\|y_k-\omega_k\|^2+2\gamma\langle B_{\xi_k}\omega_k-B_{\xi_k}y_k , y_k-x^*\rangle.
					\end{aligned}  \label{24}
				\end{equation}
				and
				\begin{equation}
					\begin{aligned}
					   & 2\langle u_k-u_{k+1}, y_k-x^*\rangle \\
                       =&2\langle u_k,x_k-x^*\rangle+2\langle u_k,y_k-x_k\rangle-2\langle u_{k+1},x_{k+1}-x^*\rangle \\
                       &-2\gamma\langle u_{k+1},S^{-1}B_{\xi_k}y_k-S^{-1}B_{\xi_k}x_k\rangle. \\
                       \leq &2\langle u_k,x_k-x^*\rangle+2\|u_k\|_{S^{-1}}\|y_k-x_k\|_S-2\langle u_{k+1},x_{k+1}-x^*\rangle \\
                       & +2\gamma\|u_{k+1}\|_{S^{-1}}\|B_{\xi_k}y_k-B_{\xi_k}x_k\|_{S^{-1}} \\
                       \leq&  2\langle u_k,x_k-x^*\rangle-2\langle u_{k+1},x_{k+1}-x^*\rangle+L_{k-1}\|y_{k-1}-\bar{x}_{k-1}\|_S^2 \\
                       & +L_{k-1}\|y_{k}-x_{k}\|_S^2+2\gamma\|u_{k+1}\|_{S^{-1}}\|B_{\xi_k}y_k-B_{\xi_k}x_k\|_{S^{-1}}. \\
					\end{aligned}  \label{25}
				\end{equation}
By Lemma \ref{lem0}, we get
				\begin{equation}
					-2\gamma\langle C\omega_k-Cx^* , y_k-x^* \rangle \leq \frac{\gamma\beta}{2}\|y_k-\omega_k\|_S^2. \label{26}
				\end{equation}
				Combining \eqref{24}, \eqref{25},  and \eqref{26}, we get
				\begin{equation}
					\begin{aligned} \label{27}
						& 2\gamma\langle B\omega_k-Bx^*+ B_{\xi_k}y_k-B_{\xi_k}\omega_k , y_k-x^*\rangle+\|x_{k+1}-x^*\|_S^2+2\langle u_{k+1}, x_{k+1}-x^*\rangle\\
						\leq &  \lambda\|x_k-x^*\|_S^2 +(1-\lambda)\|\omega_k-x^*\|_S^2+2\langle u_k,x_k-x^*\rangle+L_{k-1}\|y_{k-1}-\bar{x}_{k-1}\|_S^2 \\
                        &+\|x_{k+1}-y_k\|_S^2-(\lambda-L_{k-1})\|y_k-x_k\|_S^2 -(1-\lambda)\|y_k-\omega_k\|^2\\
                        &+2\gamma\|u_{k+1}\|_{S^{-1}}\|B_{\xi_k}y_k-B_{\xi_k}x_k\|_{S^{-1}}+\frac{\gamma\beta}{2}\|y_k-\omega_k\|_S^2.
					\end{aligned} 					
				\end{equation}
Taking expectation $\mathbb{E}_k$ on \eqref{27} and using
\begin{equation*}
\mathbb{E}_k [ \langle B\omega_k+B_{\xi_k}y_k-B_{\xi_k}\omega_k, y_k-x^* \rangle]=\langle By_k, y_k-x^* \rangle ,
\end{equation*}
we obtain
\begin{equation}
					\begin{aligned} \label{28}
						& 2\gamma\langle By_k-Bx^* , y_k-x^*\rangle+ \mathbb{E}_k \|x_{k+1}-x^*\|_S^2+2\mathbb{E}_k\langle u_{k+1}, x_{k+1}-x^*\rangle\\
						\leq &  \lambda\|x_k-x^*\|_S^2 +(1-\lambda)\|\omega_k-x^*\|_S^2+2\langle u_k,x_k-x^*\rangle+L_{k-1}\|y_{k-1}-\bar{x}_{k-1}\|_S^2 \\
                        &+\mathbb{E}_k\|x_{k+1}-y_k\|_S^2-(\lambda-L_{k-1})\|y_k-x_k\|_S^2 -(1-\lambda)\|y_k-\omega_k\|^2\\
                        &+2\gamma\mathbb{E}_k[\|u_{k+1}\|_{S^{-1}}\|B_{\xi_k}y_k-B_{\xi_k}x_k\|_{S^{-1}}]+\frac{\gamma\beta}{2}\|y_k-\omega_k\|_S^2.
					\end{aligned} 					
				\end{equation}
By the monotonicity of $B$, we have
\begin{equation}
\label{29}
2\gamma\langle By_k-Bx^* , y_k-x^*\rangle \geq 0.
\end{equation}
Combining the definition of $x_{k+1}$ and \eqref{lip}, we have
\begin{equation*}
\label{30}
\mathbb{E}_k \|x_{k+1}-y_k\|_S^2 \leq \gamma^2\theta^2\|y_k-\omega_k\|_S^2.
\end{equation*}
With the help of the Young’s inequality, we get
\begin{equation*}
2\gamma\mathbb{E}_k[\|u_{k+1}\|_{S^{-1}}\|B_{\xi_k}y_k-B_{\xi_k}x_k\|_{S^{-1}}] \leq \gamma L_k\theta\|y_k-\bar{x}_k\|_S^2+\gamma L_k\theta\|y_k-x_k\|_S^2.
\end{equation*}
Further, we obtain
\begin{equation}
\begin{aligned}
\label{31}
&\mathbb{E}_k \|x_{k+1}-x^*\|_S^2+2\mathbb{E}_k\langle u_{k+1}, x_{k+1}-x^*\rangle \\
\leq &  \lambda\|x_k-x^*\|_S^2 +(1-\lambda)\|\omega_k-x^*\|_S^2+2\langle u_k,x_k-x^*\rangle+L_{k-1}\|y_{k-1}-\bar{x}_{k-1}\|_S^2 \\
&-(\lambda-L_{k-1}-\gamma L_k\theta)\|y_k-x_k\|_S^2 -(1-\lambda-\gamma^2\theta^2-\frac{\gamma\beta}{2})\|y_k-\omega_k\|_S^2\\
&+\gamma L_k\theta\|y_k-\bar{x}_k\|_S^2.\\
\end{aligned} 					
\end{equation}
On the one hand, the definition of $w_{k+1}$ and $\mathbb{E}_{k+\frac{1}{2}}$ yield that
\begin{equation}
\label{32}
\frac{1-\lambda}{p}\mathbb{E}_{k+\frac{1}{2}}\|\omega_{k+1}-x^*\|_S^2=(1-\lambda)\|x_{k+1}-x^*\|_S^2+(1-\lambda)\frac{1-p}{p}\|\omega_k-x^*\|_S^2.
\end{equation}
Then apply to \eqref{32} the tower property $\mathbb{E}_k[\mathbb{E}_{k+\frac{1}{2}}[\cdot]]=\mathbb{E}_k[\cdot]$, we have
\begin{equation}
\label{33}
\frac{1-\lambda}{p}\mathbb{E}_{k}\|\omega_{k+1}-x^*\|_S^2=(1-\lambda)\mathbb{E}_{k}\|x_{k+1}-x^*\|_S^2+(1-\lambda)\frac{1-p}{p}\|\omega_k-x^*\|_S^2.
\end{equation}
Adding \eqref{33} to \eqref{31}, we obtain
\begin{equation}
\begin{aligned}
\label{34}
&\lambda\mathbb{E}_k \|x_{k+1}-x^*\|_S^2+\frac{1-\lambda}{p}\mathbb{E}_k \|\omega_{k+1}-x^*\|_S^2+2\mathbb{E}_k\langle u_{k+1}, x_{k+1}-x^*\rangle \\
\leq &  \lambda\|x_k-x^*\|_S^2 +\frac{1-\lambda}{p}\|\omega_k-x^*\|_S^2+2\langle u_k,x_k-x^*\rangle+L_{k-1}\|y_{k-1}-\bar{x}_{k-1}\|_S^2 \\
&-(\lambda-L_{k-1}-\gamma L_k\theta)\|y_k-x_k\|_S^2 -(1-\lambda-\gamma^2\theta^2-\frac{\gamma\beta}{2})\|y_k-\omega_k\|_S^2\\
&+\gamma L_k\theta\|y_k-\bar{x}_k\|_S^2.\\
\end{aligned} 					
\end{equation}
On the other hand,
\begin{equation}
\begin{aligned}
\label{35}
&(\gamma L_k\theta+L_k)\|y_k-\bar{x}_k\|_S^2 \\
=& (\gamma L_k\theta+L_k)\|\lambda (y_k-x_k)+(1-\lambda)(y_k-\omega_k)\|_S^2 \\
\leq & \lambda(\gamma L_k\theta+L_k)\|y_k-x_k\|_S^2+(1-\lambda)(\gamma L_k\theta+L_k)\|y_k-\omega_k\|_S^2.
\end{aligned} 					
\end{equation}
Therefore, by the definition of $\Gamma_k(x)$ in \eqref{Gammak}.
\begin{equation}
\begin{aligned}
\label{36}
\mathbb{E}_k[\Gamma_{k+1}(x^*)]
\leq & \Gamma_{k}(x^*)-(\lambda-L_{k-1}-\gamma L_k\theta-\lambda(\gamma L_k\theta+L_k))\|y_k-x_k\|_S^2 \\ &-(1-\lambda-\gamma^2\theta^2-\frac{\gamma\beta}{2}-(1-\lambda)(\gamma L_k\theta+L_k))\|y_k-\omega_k\|_S^2,
\end{aligned} 					
\end{equation}
showing \eqref{lypunov}. This completes the proof.
\end{proof}
\begin{theorem}
\label{theorem2}
{
\noindent
Suppose that Assumption \ref{assumption2} holds, $p \in (0,1]$, and there exists an $\epsilon >0$ such that
\begin{equation}
\label{assumption3}
\begin{aligned}
\begin{split}
&\lambda-L_{k-1}-\gamma L_k\theta-\lambda(\gamma L_k\theta+L_k) \geq \epsilon, \\
&1-\lambda-\gamma^2\theta^2-\frac{\gamma\beta}{2}-(1-\lambda)(\gamma L_k\theta+L_k) \geq \epsilon.
\end{split}
\end{aligned}
\end{equation}
Then the sequence $\{x_k\}_{k \in \mathbb{N}}$  generated by Algorithm \ref{algorithm5} converges weakly almost surely to a $\Omega^*$-valued random variable.
}
\end{theorem}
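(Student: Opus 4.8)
The plan is to mirror, step by step, the deterministic argument behind Theorem~\ref{theorem1}, with Lemma~\ref{lemma2} playing the role of Lemma~\ref{lemma} and the stochastic quasi-Fej\'er principle of Lemma~\ref{Comettes} replacing the Opial-type Lemma~\ref{lem1}. First I would record the non-negativity of the Lyapunov functional, as in \eqref{12}: since $u_k=(\gamma M_{k-1}-S)y_{k-1}-(\gamma M_{k-1}-S)\bar x_{k-1}$, Assumption~\ref{assumption2}(v) gives $\|u_k\|_{S^{-1}}\le L_{k-1}\|y_{k-1}-\bar x_{k-1}\|_S$, so Young's inequality yields $2\langle u_k,x_k-x^*\rangle\ge -L_{k-1}\|y_{k-1}-\bar x_{k-1}\|_S^2-L_{k-1}\|x_k-x^*\|_S^2$ and hence $\Gamma_k(x^*)\ge (\lambda-L_{k-1})\|x_k-x^*\|_S^2+\tfrac{1-\lambda}{p}\|\omega_k-x^*\|_S^2$. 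Condition~\eqref{assumption3} forces $\lambda-L_{k-1}\ge\epsilon>0$ and keeps $L_k$ bounded, so $\Gamma_k(x^*)\ge \epsilon\|x_k-x^*\|_S^2\ge 0$.

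Next, combining Lemma~\ref{lemma2} with \eqref{assumption3} gives $\mathbb{E}_k[\Gamma_{k+1}(x^*)]\le \Gamma_k(x^*)-\epsilon\|y_k-x_k\|_S^2-\epsilon\|y_k-\omega_k\|_S^2$ for every $x^*\in\Omega^*$. Via a Robbins--Siegmund / stochastic quasi-Fej\'er argument (Lemma~\ref{Comettes}) I would conclude: $\{\Gamma_k(x^*)\}$ converges almost surely, $\sum_k(\|y_k-x_k\|_S^2+\|y_k-\omega_k\|_S^2)<\infty$ almost surely, and, on one event of full probability, $\|x_k-x^*\|_S$ converges for \emph{every} $x^*\in\Omega^*$ (the separability of $\Omega^*\subset\mathcal{X}$ reduces the ``for all $x^*$'' quantifier to a countable dense subset, using boundedness of $\{x_k\}$). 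From the summability, $\|y_k-x_k\|_S\to 0$ and $\|y_k-\omega_k\|_S\to 0$ a.s.; since $x_{k+1}-y_k=\gamma S^{-1}(B_{\xi_k}\omega_k-B_{\xi_k}y_k)$ and \eqref{lip} gives $\mathbb{E}_k\|x_{k+1}-y_k\|_S^2\le\gamma^2\theta^2\|y_k-\omega_k\|_S^2$, we get $\sum_k\mathbb{E}_k\|x_{k+1}-y_k\|_S^2<\infty$ a.s., hence $\|x_{k+1}-y_k\|_S\to 0$ a.s.; consequently $\omega_k-x_k\to 0$, $\bar x_k-y_k\to 0$, and $\|u_{k+1}\|_{S^{-1}}\le L_k\|y_k-\bar x_k\|_S\to 0$ a.s.

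With these limits I would finish exactly as for Theorem~\ref{theorem1}. Almost sure boundedness of $\{x_k\}$ and $\{\omega_k\}$ follows from the a.s. convergence of $\Gamma_k(x^*)$ and the lower bound above; this forces the auxiliary terms $2\langle u_k,x_k-x^*\rangle$ and $L_{k-1}\|y_{k-1}-\bar x_{k-1}\|_S^2$ to vanish, and, since $\omega_k-x_k\to 0$, that $(\lambda+\tfrac{1-\lambda}{p})\|x_k-x^*\|_S^2=\Gamma_k(x^*)+o(1)$ converges a.s. For the weak cluster points, set $\Delta_k:=M_k\bar x_k-M_ky_k+(B+C)y_k-(B+C)\omega_k+\gamma^{-1}u_k$; by \eqref{20} this lies in $(A+B+C)y_k$, and uniform Lipschitz continuity of $M_k$ w.r.t.\ $S$, Lipschitz continuity of $B=\sum_i B_i$, and $\beta$-Lipschitz continuity of $C$ w.r.t.\ $S$ yield $\|\Delta_k\|_{S^{-1}}\to 0$ a.s. Since $B$ has full domain, $A+B$ is maximally monotone by Corollary~25.5(i) of \cite{BC2011}, and then $A+B+C$ is maximally monotone by Lemma~2.1 of \cite{Showalter}; combined with $x_k-y_k\to 0$ and demiclosedness of $\gra(A+B+C)$ (Lemma~\ref{lem3}), every weak cluster point of $\{x_k\}$ lies in $\Omega^*$ a.s. Applying Lemma~\ref{lem1} pathwise on the full-probability event (equivalently Lemma~\ref{Comettes}(iii)) then gives that $\{x_k\}$ converges weakly, almost surely, to an $\Omega^*$-valued random variable.

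The main obstacle, I expect, is reconciling the Lyapunov inequality of Lemma~\ref{lemma2} with the hypotheses of Lemma~\ref{Comettes}: $\Gamma_k(x^*)$ is not a plain squared distance because of the cross term $2\langle u_k,x_k-x^*\rangle$ and the carried-over term $L_{k-1}\|y_{k-1}-\bar x_{k-1}\|_S^2$, so one must show that these momentum-induced terms neither destroy non-negativity nor obstruct the a.s.\ convergence of $\|x_k-x^*\|_S$. This is intertwined with the measure-theoretic bookkeeping of the loopless variance-reduced scheme: the filtration generated by $(\xi_k,\omega_k)$ versus by the iterates, the tower property $\mathbb{E}_k[\mathbb{E}_{k+1/2}[\cdot]]=\mathbb{E}_k[\cdot]$, and the passage from $\sum_k\mathbb{E}_k[Z_k]<\infty$ a.s.\ to $\sum_k Z_k<\infty$ a.s.
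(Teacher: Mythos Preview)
Your proposal is correct and follows essentially the same route as the paper: non-negativity of $\Gamma_k$ via the Lipschitz bound on $u_k$, Lemma~\ref{lemma2} combined with \eqref{assumption3} fed into the stochastic quasi-Fej\'er Lemma~\ref{Comettes}, then the residual $\Delta_k\in(A+B+C)y_k$ is shown to vanish and demiclosedness plus Lemma~\ref{Comettes}(iii) finish. The extra step $\|x_{k+1}-y_k\|_S\to 0$ you derive is not actually needed (the paper gets $\bar x_k-y_k\to 0$ and $u_k\to 0$ directly from $y_k-x_k\to 0$ and $y_k-\omega_k\to 0$), and your explicit discussion of how to fit $\Gamma_k$ into the hypotheses of Lemma~\ref{Comettes} is more careful than the paper's own treatment, which simply invokes Lemma~\ref{Comettes}(ii) without addressing that $\Gamma_k(x^*)$ is not of the form $\phi(\|x_k-x^*\|)$.
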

\begin{proof}
Leveraging the definition of $\Gamma_k(x^*)$ in \eqref{Gammak}, the Lipschitz property of $\gamma M_k-S$ and Cauchy-Schwarz inequality, we establish the non-negativity of $\Gamma_{k}(x^*)$ $(\forall k \in \mathbb{N}_0)$:
\begin{equation}
\label{37}
\begin{aligned}
\Gamma_{k}(x^*)=&\lambda\|x_k-x^*\|_{S}^2+\frac{1-\lambda}{p}\|\omega_k-x^*\|_S^2+2\langle u_k, x_k-x^*\rangle +L_{k-1}\|y_{k-1}-\bar{x}_{k-1}\|_S^2 \\
\geq & \lambda\|x^k-x^*\|_{S}^2+\frac{1-\lambda}{p}\|\omega_k-x^*\|_S^2-L_{k-1}(\|y_{k-1}-\bar{x}_{k-1}\|_S^2+\|x_k-x^*\|_S^2)\\
&+L_{k-1}\|y_{k-1}-x_{k-1}\|_S^2 \\
=& (\lambda-L_{k-1})\|x_k-x^*\|_S^2+\frac{1-\lambda}{p}\|\omega_k-x^*\|_S^2\geq 0,
\end{aligned}
\end{equation}
provided  $\lambda-L_{k-1} \geq 0$.

By Lemma \ref{Comettes} (i) and assumption \eqref{assumption3}, we know that there exists $\Xi \in \mathcal{F}$  such that $\mathbb{P}(\Xi)=1$ and $\forall  \theta \in \Xi$, it holds that $y_k(\theta)-x_k(\theta) \rightarrow 0$, $y_k(\theta)-\omega_k(\theta) \rightarrow 0$,  which means that $\omega_k(\theta)-\bar{x}_k(\theta) \rightarrow 0$. Moreover, by Lemma \ref{Comettes} (ii), there exists $\Xi^{'} \in \mathcal{F}$ such that $\mathbb{P}(\Xi^{'})=1$ and $\{\Gamma_k(x^*)(\theta)\}_{k\in \mathbb{N}_0}$ converges for $ \forall \theta \in \Xi^{'}$, $\forall x^* \in \Omega^*$. This further implies that the sequence $\{x_{k}(\theta)\}_{k \in \mathbb{N}}$ is bounded. Now,  pick any  $\theta \in \Xi \bigcap \Xi^{'}$, and consider a  weakly convergent subsequence $\{x_{k_j}(\theta)\}_{j \in \mathbb{N}}$ of $\{x_{k}(\theta)\}_{k \in \mathbb{N}}$. Without loss of generality, suppose that $x_{k_j}(\theta) \rightharpoonup \bar{x}(\theta)$.  It follows from $y_{k_j}(\theta)-x_{k_j}(\theta) \rightarrow 0$ that $y_{k_j}(\theta) \rightharpoonup \bar{x}(\theta)$.
From the inclusion given in \eqref{20}, we have
\begin{equation*}
M_k\bar{x}_{k_j}(\theta)-M_ky_{k_j}(\theta)-(B+C)\omega_{k_j}(\theta)+(B+C)y_{k_j}(\theta) +\gamma^{-1}u_{k_j}(\theta)\in (A+B+C)y_{k_j}(\theta).
\end{equation*}
Using the Lipschitz property of $M_k$, $B+C$ and $\gamma M_k - S$, we get
\begin{equation*}
M_k\bar{x}_{k_j}(\theta)-M_ky_{k_j}(\theta)-(B+C)\omega_{k_j}(\theta)+(B+C)y_{k_j}(\theta) +\gamma^{-1}u_{k_j}(\theta) \rightarrow 0.
\end{equation*}
Furthermore, by the assumption that the operator $B$ has full domain, Corollary 25.5 (i) in \cite{BC2011} ensures that $A+B$ is maximally monotone. Combining this with Lemma 2.1 in \cite{Showalter} and the assumption that $C$ is cocoercive, we conclude that $A+B+C$ is also maximally monotone. Then, by Lemma \ref{lem3}, we obtain $(\bar{x}(\theta) , 0) \in \gra(A+B+C)$, i.e., $\bar{x}(\theta) \in \Omega^*$. Hence, all weak cluster points of $\{x_{k}(\theta)\}_{k \in \mathbb{N}}$ belong to $\Omega^*$. Finally, by Lemma \ref{Comettes} (iii), the sequence $\{x_k\}_{k \in \mathbb{N}}$ converges weakly almost surely to a $\Omega^*$-valued random variable.
\end{proof}

\subsection{Linear convergence}
This section establishes the $R$-linear convergence of the sequence $\{x_k\}_{k \in \mathbb{N}}$ generated by Algorithm \ref{algorithm5}, under the assumption that the operator $B$ is  strongly monotone.
\begin{theorem}
\label{linear2}
{
\noindent
Let Assumption {\rm\ref{assumption2}} hold, and suppose that there exists a solution $x^*$ to \eqref{PROB2}. Assume further that the operator $B$ is $\rho$-strongly monotone.  Set $\lambda=1-p$, with $p \in (0,1)$, and choose parameters satisfying $L\geq \frac{3-p}{2}$, $\gamma=\min\{\frac{\sqrt{p}}{2\theta}, \frac{p}{\beta}, \frac{\alpha}{L_k \theta}\}$, $\varepsilon_3>0$, and $L_k+\alpha \leq \frac{1-\sqrt{p}}{4}$ with $L_k\geq 0,\ \alpha \geq 0$.
Then, for the sequence $\{x_k\}_{k \in \mathbb{N}}$ generated by Algorithm {\rm\ref{algorithm5}},  the following holds
\begin{equation}
\label{R-linear}
(1-p-L_k)\mathbb{E}\|x_{k+1}-x^*\|^2  \leq \frac{(1-p)\|x_1-x^*\|_S^2+\|\omega_1-x^*\|_S^2+d_1}{(1+c/(2L))^k},
\end{equation}
where $d_1= 2\langle u_1,x_1-x^*\rangle+L_0\|y_0-\bar{x}_0\|_S^2$ with constant $L \geq \frac{3-p}{2},$ and
\begin{equation}
	\begin{aligned}
		c=\min \big\{\frac{\gamma \mu}{1+\frac{L_k}{2L\varepsilon_3}}, &\frac{2L(1-p-L_{k-1}-\alpha-(1-p)(\alpha+L_k))}{(1-p) L_k(\varepsilon_3+1)},\\
		& \frac{Lp(1-\sqrt{p}-4(L_k+\alpha))}{2(1-p)(4+p)+2pL_k(\varepsilon_3+1)}\big\}.
	\end{aligned} 					
\end{equation}
}
\end{theorem}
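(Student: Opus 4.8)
The plan is to mirror the deterministic argument of Theorem~\ref{linear1}, but carried out at the level of the conditional expectation $\mathbb{E}_k[\cdot]$ and the Lyapunov function $\Gamma_k$. First I would revisit the proof of Lemma~\ref{lemma2} and, precisely at the step \eqref{29} where the monotonicity of $B$ was used to discard $2\gamma\langle By_k-Bx^*,y_k-x^*\rangle$, instead invoke the $\rho$-strong monotonicity to retain the nonnegative quantity $2\gamma\rho\|y_k-x^*\|_S^2$ (recall $y_k$ is $\sigma(\xi_0,\dots,\xi_{k-1},\omega_k)$-measurable, so $\mathbb{E}_k\|y_k-x^*\|_S^2=\|y_k-x^*\|_S^2$). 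Propagating this through \eqref{28}--\eqref{36} exactly as before yields
\begin{equation*}
\mathbb{E}_k[\Gamma_{k+1}(x^*)]+2\gamma\rho\|y_k-x^*\|_S^2\leq \Gamma_k(x^*)-P_k\|y_k-x_k\|_S^2-Q_k\|y_k-\omega_k\|_S^2,
\end{equation*}
with $P_k=\lambda-L_{k-1}-\gamma L_k\theta-\lambda(\gamma L_k\theta+L_k)$ and $Q_k=1-\lambda-\gamma^2\theta^2-\tfrac{\gamma\beta}{2}-(1-\lambda)(\gamma L_k\theta+L_k)$. Substituting $\lambda=1-p$ and the consequences $\gamma^2\theta^2\le p/4$, $\gamma\beta/2\le p/2$, $\gamma L_k\theta\le\alpha$ of $\gamma=\min\{\sqrt p/(2\theta),\,p/\beta,\,\alpha/(L_k\theta)\}$, together with $L_k+\alpha\le(1-\sqrt p)/4$, shows $P_k>0$, $Q_k\ge p(1-\sqrt p-4(L_k+\alpha))/4>0$, and $\lambda-L_k=1-p-L_k>0$; in particular the lower bound \eqref{37}, i.e. $\Gamma_j(x^*)\ge(\lambda-L_{j-1})\|x_j-x^*\|_S^2$, remains valid.

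Next I would convert the surplus $2\gamma\rho\|y_k-x^*\|_S^2$ into a multiple of $\mathbb{E}_k\|x_{k+1}-x^*\|_S^2$. Since $y_k-x^*=(x_{k+1}-x^*)+\gamma S^{-1}(B_{\xi_k}y_k-B_{\xi_k}\omega_k)$, expanding the square, taking $\mathbb{E}_k$, applying Young's inequality to the cross term, and using $\mathbb{E}_k\|B_{\xi_k}y_k-B_{\xi_k}\omega_k\|_{S^{-1}}^2\le\theta^2\|y_k-\omega_k\|_S^2$ produces an estimate of the form $2\gamma\rho\|y_k-x^*\|_S^2\ge 2\gamma\rho(1-\gamma\varepsilon)\mathbb{E}_k\|x_{k+1}-x^*\|_S^2-(\mathrm{const})\|y_k-\omega_k\|_S^2$, the negative residual being absorbed by $Q_k\|y_k-\omega_k\|_S^2$ (using the step-size restrictions once more). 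In parallel I would bound the indefinite part $a_{k+1}:=2\langle u_{k+1},x_{k+1}-x^*\rangle+L_k\|y_k-\bar x_k\|_S^2$ from above: the identity $u_{k+1}=(\gamma M_k-S)y_k-(\gamma M_k-S)\bar x_k$ and Assumption~\ref{assumption2}(v) give $\|u_{k+1}\|_{S^{-1}}\le L_k\|y_k-\bar x_k\|_S$, so Young's inequality with parameter $\varepsilon_3$ and the convexity bound $\|y_k-\bar x_k\|_S^2\le\lambda\|y_k-x_k\|_S^2+(1-\lambda)\|y_k-\omega_k\|_S^2$ (since $\bar x_k=\lambda x_k+(1-\lambda)\omega_k$) express $a_{k+1}$ through $\|x_{k+1}-x^*\|_S^2$, $\|y_k-x_k\|_S^2$ and $\|y_k-\omega_k\|_S^2$. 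Finally, to dominate the $\tfrac{1-\lambda}{p}\|\omega_k-x^*\|_S^2$ that survives inside $\mathbb{E}_k[\Gamma_{k+1}(x^*)]$ (via the $\omega$-update identity \eqref{33}) I would use $\|\omega_k-x^*\|_S^2\le(1+\eta)\|y_k-x^*\|_S^2+(1+\eta^{-1})\|y_k-\omega_k\|_S^2$ followed once more by the Step-2 bound $\|y_k-x^*\|_S^2\le(1+\eta')\mathbb{E}_k\|x_{k+1}-x^*\|_S^2+(1+\eta'^{-1})\gamma^2\theta^2\|y_k-\omega_k\|_S^2$, tuning $\eta,\eta'$ so that the resulting coefficient of $\mathbb{E}_k\|x_{k+1}-x^*\|_S^2$ does not exceed $2L$, which is where the hypothesis $L\ge(3-p)/2$ enters.

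Collecting these estimates, the goal is to show that the surplus $2\gamma\rho(1-\gamma\varepsilon)\mathbb{E}_k\|x_{k+1}-x^*\|_S^2$, together with the leftover positive multiples of $\|y_k-x_k\|_S^2$ and $\|y_k-\omega_k\|_S^2$, dominates $\tfrac{c}{2L}\,\mathbb{E}_k[\Gamma_{k+1}(x^*)]$ for the constant $c$ of the statement; the three branches of the minimum defining $c$ correspond exactly to the three places where $\mathbb{E}_k\|x_{k+1}-x^*\|_S^2$, $\|y_k-x_k\|_S^2$ and $\|y_k-\omega_k\|_S^2$ are traded off (the second branch being $2L$ times the lower bound on $P_k$ divided by the $\|y_k-x_k\|_S^2$-coefficient $(1-p)L_k(\varepsilon_3+1)$ of $a_{k+1}$, and the third being governed by the slack $1-\sqrt p-4(L_k+\alpha)$ in $Q_k$). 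This gives the one-step contraction $(1+\tfrac{c}{2L})\,\mathbb{E}_k[\Gamma_{k+1}(x^*)]\le\Gamma_k(x^*)$; taking total expectations, iterating from $k=1$, and invoking $\Gamma_{k+1}(x^*)\ge(1-p-L_k)\|x_{k+1}-x^*\|_S^2$ yields \eqref{R-linear}. I expect the main obstacle to be this final bookkeeping: in Theorem~\ref{linear1} there is no averaging variable, whereas here the term $\|\omega_k-x^*\|_S^2$ in $\Gamma_k$ is not directly controlled by $\|x_{k+1}-x^*\|_S^2$, and making it controllable (up to the deviation $\|y_k-\omega_k\|_S^2$, which the step-size conditions render negligible) is precisely what forces the constraints $\gamma\le\sqrt p/(2\theta)$, $L_k+\alpha\le(1-\sqrt p)/4$, $L\ge(3-p)/2$ and the particular shape of $c$; carrying all the Young parameters and the rescaling by $L$ through simultaneously is the delicate step.
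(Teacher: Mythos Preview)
Your high-level plan---retain the $2\gamma\rho\|y_k-x^*\|_S^2$ surplus from strong monotonicity in the proof of Lemma~\ref{lemma2}, convert it into a multiple of $\mathbb{E}_k\|x_{k+1}-x^*\|_S^2$, bound $d_{k+1}$ via Young with parameter $\varepsilon_3$, and iterate the resulting contraction---is exactly the paper's strategy. The technical route diverges in two places, though, and as a result your argument would establish linear convergence but not with the specific constant $c$ printed in the statement.

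First, for the conversion $\|y_k-x^*\|_S^2\to\mathbb{E}_k\|x_{k+1}-x^*\|_S^2$ the paper does not use a parametrized Young inequality; it simply applies $\|a+b\|^2\le 2\|a\|^2+2\|b\|^2$ to $x_{k+1}-x^*=(y_k-x^*)-\gamma S^{-1}(B_{\xi_k}y_k-B_{\xi_k}\omega_k)$, which gives the clean coefficient $\gamma\mu$ in front of $\mathbb{E}_k\|x_{k+1}-x^*\|_S^2$ and the residual $-2\gamma^3\theta^2\mu\|y_k-\omega_k\|_S^2$, with no extra free parameter $\varepsilon$. Second---and this is the more substantive difference---the paper does \emph{not} attack $\|\omega_k-x^*\|_S^2$ backwards through $\|y_k-x^*\|_S^2$ as you propose. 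Instead it leaves $\mathbb{E}_k\|\omega_{k+1}-x^*\|_S^2$ intact on the left and \emph{creates} the missing $\tfrac{c}{2L}\mathbb{E}_k\|\omega_{k+1}-x^*\|_S^2$ forward, by spending $\tfrac{c}{L}\mathbb{E}_k\|x_{k+1}-x^*\|_S^2$ from the surplus: another application of $\|a+b\|^2\le 2\|a\|^2+2\|b\|^2$ together with $\mathbb{E}_{k+\frac12}\|x_{k+1}-\omega_{k+1}\|_S^2=(1-p)\|x_{k+1}-\omega_k\|_S^2$ and $\|x_{k+1}-\omega_k\|_S^2\le 2(1+\gamma^2\theta^2)\|y_k-\omega_k\|_S^2$ yields
\[
\tfrac{c}{L}\mathbb{E}_k\|x_{k+1}-x^*\|_S^2\ \ge\ \tfrac{c}{2L}\mathbb{E}_k\|\omega_{k+1}-x^*\|_S^2-\tfrac{c(1-p)(4+p)}{2L}\|y_k-\omega_k\|_S^2,
\]
whence the factor $(1-p)(4+p)$ in the third branch of $c$. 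The condition $L\ge\tfrac{3-p}{2}$ then enters not as an upper bound on a coefficient, but precisely as the inequality $1-p+\tfrac{(L-1)c}{L}\ge(1-p)\bigl(1+\tfrac{c}{2L}\bigr)$ needed to factor out the common contraction $1+\tfrac{c}{2L}$. Your backward scheme with auxiliary parameters $\eta,\eta'$ can be made to work, but it produces a different---and messier---rate constant; to recover the stated $c$ you should follow the forward route via the display above.
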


\begin{proof}
Since $B$ is $\mu$-strongly monotone w.r.t. $S$, then \eqref{29} becomes
\begin{equation}
\label{38}
2\gamma\langle By_k-Bx^* , y_k-x^*\rangle \geq 2\gamma\mu\|y_k-x^*\|_S^2.
\end{equation}
Proceed as in the proof of Theorem \ref{theorem2} to obtain, in palce of \eqref{36},
\begin{equation}
\begin{aligned}
\label{39}
&\ 2\gamma\mu\|y_k-x^*\|_S^2+\lambda\mathbb{E}_k \|x_{k+1}-x^*\|_S^2+\frac{1-\lambda}{p}\mathbb{E}_k \|\omega_{k+1}-x^*\|_S^2 \\
&+2\mathbb{E}_k\langle u_{k+1}, x_{k+1}-x^*\rangle+L_{k}\|y_{k}-\bar{x}_{k}\|_S^2 \\
\leq &  \lambda\|x_k-x^*\|_S^2 +\frac{1-\lambda}{p}\|\omega_k-x^*\|_S^2+2\langle u_k,x_k-x^*\rangle+L_{k-1}\|y_{k-1}-\bar{x}_{k-1}\|_S^2 \\
&-(\lambda-L_{k-1}-\gamma L_k\theta}{-\lambda(\gamma L_k\theta+L_k))\|y_k-x_k\|_S^2 \\ &-(1-\lambda-\gamma^2\theta^2-\frac{\gamma\beta}{2}-(1-\lambda)(\gamma L_k\theta+L_k))\|y_k-\omega_k\|_S^2.
\end{aligned} 					
\end{equation}
By the inequality $\|a+b\|^2 \leq 2\|a\|^2 + 2\|b\|^2$, the definition of $x_{k+1}$, and  \eqref{lip}, we obtain
\begin{equation}
\label{40}
\aligned
2\gamma\mu\|y_k-x^*\|^2
& \geq \gamma \mu \mathbb{E}_k[\|x_{k+1}-x^*\|_S^2]-2\gamma\mu \mathbb{E}_k[\|\gamma(B_{\xi_k}\omega_k - B_{\xi_k}y_k)\|_{S^{-1}}^2] \\
& \geq \gamma \mu \mathbb{E}_k[\|x_{k+1}-x^*\|_S^2]-2\gamma^3\theta^2\mu\|y_k-\omega_k\|_S^2.
\endaligned
\end{equation}
Combining \eqref{39}, \eqref{40}, and the identity $\lambda=1-p$,  we obtain
\begin{equation}
\label{41}
\aligned
&(1-p+\gamma \mu)\mathbb{E}_k [\|x_{k+1}-x^*\|_S^2]+\mathbb{E}_k[\|\omega_{k+1}-x^*\|^2]+2\mathbb{E}_k\langle u_{k+1}, x_{k+1}-x^*\rangle \\
&+L_{k}\|y_{k}-\bar{x}_{k}\|_S^2 \\
\leq & (1-p)\|x_k-x^*\|_S^2 +\|\omega_k-x^*\|_S^2+2\langle u_k,x_k-x^*\rangle+L_{k-1}\|y_{k-1}-\bar{x}_{k-1}\|_S^2 \\
&-(1-p-L_{k-1}-\gamma L_k\theta-(1-p)(\gamma L_k\theta+L_k))\|y_k-x_k\|_S^2 \\
&-(p-\gamma^2\theta^2-\frac{\gamma\beta}{2}-p(\gamma L_k\theta+L_k)-2\gamma^3\theta^2\mu)\|y_k-\omega_k\|_S^2.\\
\endaligned
\end{equation}
On the one hand, using the assumptions $\gamma \leq \frac{\alpha}{L_k\theta}$, $0 < p \leq 1$, and $L_k+\alpha \leq \frac{1-\sqrt{p}}{4}$ for $\forall k \in \mathbb{N}$, we obtain the following inequality:
\begin{equation}
\aligned
&1-p-L_{k-1}-\gamma L_k\theta-(1-p)(\gamma L_k\theta+L_k)\\
\geq & 1-p-L_{k-1}-\alpha-(1-p)(\alpha+L_k) \geq 0.
\endaligned
\end{equation}
Besides, by $\gamma=\min\{\frac{\sqrt{p}}{2\theta}, \frac{p}{\beta}, \frac{\alpha}{L_k \theta}\}$, $\mu\leq \theta$, and $L_k+\alpha \leq \frac{1-\sqrt{p}}{4}$ for $\forall k \in \mathbb{N}$, the following inequality holds:
\begin{equation}
\aligned
&p-\gamma^2\theta^2-\frac{\gamma\beta}{2}-p(\gamma L_k\theta+L_k)-2\gamma^3\theta^2\mu \\
\geq & p-\frac{p}{4}-\frac{p}{2}-\alpha p-L_kp-\frac{p^{\frac{3}{2}}}{4} \\
= & \frac{p(1-\sqrt{p})}{4}-(L_k+\alpha)p \geq 0.
\endaligned
\end{equation}
Thus,
\begin{equation}
\label{41+}
\aligned
&(1-p+\gamma \mu)\mathbb{E}_k [\|x_{k+1}-x^*\|_S^2]+\mathbb{E}_k[\|\omega_{k+1}-x^*\|_S^2]+2\mathbb{E}_k\langle u_{k+1}, x_{k+1}-x^*\rangle \\
&+L_{k}\|y_{k}-\bar{x}_{k}\|_S^2 \\
\leq & (1-p)\|x_k-x^*\|_S^2 +\|\omega_k-x^*\|_S^2+2\langle u_k,x_k-x^*\rangle+L_{k-1}\|y_{k-1}-\bar{x}_{k-1}\|_S^2 \\
&-(1-p-L_{k-1}-\alpha-(1-p)(\alpha+L_k))\|y_k-x_k\|_S^2 \\
&-(\frac{p(1-\sqrt{p})}{4}-(L_k+\alpha)p)\|y_k-\omega_k\|_S^2.
\endaligned
\end{equation}
Set
\begin{equation*}
\aligned
d_k=&2\langle u_k,x_k-x^*\rangle+L_{k-1}\|y_{k-1}-\bar{x}_{k-1}\|_S^2, \\
e_k=&(1-p-L_{k-1}-\alpha-(1-p)(\alpha+L_k))\|y_k-x_k\|_S^2\\
&+(\frac{p(1-\sqrt{p})}{4}-(L_k+\alpha)p-\frac{c(1-p)(4+p)}{2L})\|y_k-\omega_k\|_S^2,
\endaligned
\end{equation*}
where $L\geq \frac{3-p}{2}$.
Then \eqref{41+} is equivalent to the following inequality
\begin{equation}
\label{41++}
\aligned
&(1-p+\gamma \mu)\mathbb{E}_k [\|x_{k+1}-x^*\|^2]+\mathbb{E}_k[\|\omega_{k+1}-x^*\|^2]+\mathbb{E}_k[d_{k+1}] \\
\leq & (1-p)\|x_k-x^*\|_S^2 +\|\omega_k-x^*\|_S^2+d_k-e_k-\frac{c(1-p)(4+p)}{2L}\|y_k-\omega_k\|_S^2. \\
\endaligned
\end{equation}
Using Lipschitzness of $u_{k+1}$ and Young's inequality, we have
\begin{equation}
\label{47}
\begin{aligned}
d_{k+1}
= & 2\langle u_{k+1},x_{k+1}-x^*\rangle+L_{k}\|y_k-\bar{x}_k\|_S^2 \\
\leq  & \varepsilon_3L_k\|y_{k}-\bar{x}_{k}\|_S^2+\frac{L_k}{\varepsilon_3}\|x_{k+1}-x^*\|_S^2+L_{k}\|y_{k}-\bar{x}_{k}\|_S^2\\
 = & L_k(\varepsilon_3+1)\|y_{k}-\bar{x}_{k}\|_S^2+\frac{L_k}{\varepsilon_3}\|x_{k+1}-x^*\|^2_S\\
 \leq &\lambda L_k(\varepsilon_3+1)\|y_{k}-x_{k}\|_S^2+(1-\lambda)L_k(\varepsilon_3+1)\|y_{k}-\omega_{k}\|_S^2 \\
& +\frac{L_k}{\varepsilon_3}\|x_{k+1}-x^*\|^2_S \\
= &(1-p) L_k(\varepsilon_3+1)\|y_{k}-x_{k}\|_S^2+pL_k(\varepsilon_3+1)\|y_{k}-\omega_{k}\|_S^2 \\
& +\frac{L_k}{\varepsilon_3}\|x_{k+1}-x^*\|^2_S.
\end{aligned} 					
\end{equation}
By the assumption that
\begin{equation}
\label{48}
\begin{aligned}
c=\min\big\{\frac{\gamma \mu}{1+\frac{L_k}{2L\varepsilon_3}}, &\frac{2L(1-p-L_{k-1}-\alpha-(1-p)(\alpha+L_k))}{(1-p) L_k(\varepsilon_3+1)},\\
 & \frac{Lp(1-\sqrt{p}-4(L_k+\alpha))}{2(1-p)(4+p)+2pL_k(\varepsilon_3+1)}\big\},
\end{aligned} 					
\end{equation}
and $L_k+\alpha \leq \frac{1-\sqrt{p}}{4}$ with $L_k\geq 0,\ \alpha \geq 0,$  we have that $c \geq 0.$ Then we get
\begin{equation}
\label{42-}
\begin{aligned}
\frac{c}{2L}d_{k+1}
\leq & (\gamma\mu-c)\|x_{k+1}-x^*\|_S^2+(1-p-L_{k-1}-\alpha-(1-p)(\alpha+L_k))\|y_k-x_k\|_S^2 \\
& +(\frac{p(1-\sqrt{p})}{4}-(L_k+\alpha)p-\frac{c(1-p)(4+p)}{2L})\|y_k-\omega_k\|_S^2 ,
\end{aligned} 	
\end{equation}
Taking expectation $\mathbb{E}_k$ on \eqref{42-} and combining \eqref{41++}, we obtain
\begin{equation}
	\label{80}
\aligned
&(1-p+c)\mathbb{E}_k [\|x_{k+1}-x^*\|_S^2]+\mathbb{E}_k[\|\omega_{k+1}-x^*\|_S^2]+(1+\frac{c}{2L})\mathbb{E}_k[d_{k+1}] \\
\leq & (1-p)\|x_k-x^*\|_S^2 +\|\omega_k-x^*\|_S^2+d_k-\frac{c(1-p)(4+p)}{2L}\|y_k-\omega_k\|_S^2. \\
\endaligned
\end{equation}
Similar to \eqref{40},
\begin{equation}
\label{42}
\aligned \frac{c}{L}\mathbb{E}_k[\|x_{k+1}-x^*\|_S^2]
& \geq \frac{c}{2L} \mathbb{E}_k[\|\omega_{k+1}-x^*\|_S^2] - \frac{c}{L}\mathbb{E}_k[\mathbb{E}_{k+\frac{1}{2}}\|x_{k+1}-\omega_{k+1}\|_S^2] \\
& = \frac{c}{2L} \mathbb{E}_k[\|\omega_{k+1}-x^*\|_S^2] - \frac{c(1-p)}{L}\mathbb{E}_k[\|x_{k+1}-\omega_k\|_S^2] \\
& \geq \frac{c}{2L} \mathbb{E}_k[\|\omega_{k+1}-x^*\|_S^2] -\frac{2c(1-p)(1+\gamma^2\theta^2)}{L}\|y_k-\omega_k\|_S^2 \\
& \geq \frac{c}{2L} \mathbb{E}_k[\|\omega_{k+1}-x^*\|_S^2] - \frac{c(1-p)(4+p)}{2L}\|y_k-\omega_k\|_S^2.
\endaligned
\end{equation}
Putting \eqref{42} into \eqref{80} gives us
\begin{equation}
\label{43}
\aligned
& (1-p+\frac{(L-1)c}{L})\mathbb{E}_k [\|x_{k+1}-x^*\|_S^2]+(1+\frac{c}{2L})\mathbb{E}_k[\|\omega_{k+1}-x^*\|_S^2] \\
&+(1+\frac{c}{2L})\mathbb{E}_k[d_{k+1}]\\
\leq & (1-p)\|x_k-x^*\|_S^2 +\|\omega_k-x^*\|_S^2+d_k.
\endaligned
\end{equation}
Then  using $1-p+\frac{(L-1)c}{L} \geq (1-p)(1+\frac{c}{2L})$ and taking the full expectation on \eqref{43}, yields
\begin{equation}
\label{44}
\aligned
&(1+\frac{c}{2L})\mathbb{E}[(1-p)\|x_{k+1}-x^*\|_S^2+\|\omega_{k+1}-x^*\|_S^2+ d_{k+1}] \\
\leq & \mathbb{E}[(1-p)\|x_k-x^*\|_S^2+\|\omega_k-x^*\|_S^2+d_k].
\endaligned
\end{equation}
In addition, by \eqref{37}, we know that
\begin{equation}
\label{45}
\aligned
&(1-p)\|x_{k+1}-x^*\|_S^2+\|\omega_{k+1}-x^*\|_S^2+d_{k+1} \\
\geq &(1-p-L_{k})\|x_{k+1}-x^*\|_S^2+\|\omega_{k+1}-x^*\|_S^2 \\
\geq &(1-p-L_{k})\|x_{k+1}-x\|_S^2.
\endaligned
\end{equation}
Taking the full expectation $\mathbb{E}$ on \eqref{45} and combining \eqref{44}, we obtain
\begin{equation}
\aligned
\label{46}
& (1-p-L_{k})\mathbb{E}\|x_{k+1}-x\|_S^2\\
\leq & \mathbb{E}[(1-p)\|x_{k+1}-x^*\|_S^2+\|\omega_{k+1}-x^*\|_S^2+ d_{k+1}] \\
\leq & \frac{\mathbb{E}[(1-p)\|x_k-x^*\|_S^2+\|\omega_k-x^*\|_S^2+d_k]}{1+c/(2L)}\\
\leq & \ldots \leq  \frac{(1-p)\|x_1-x^*\|_S^2+\|\omega_1-x^*\|_S^2+d_1}{(1+c/(2L))^k},
\endaligned
\end{equation}
which implies that \eqref{R-linear}. This completes the proof.
\end{proof}

\section{Numerical experiments}	
In this section, we focus on a special case of Algorithm \ref{algorithm1}, namely Algorithm \eqref{4operator}. We evaluate the performance of Algorithm \eqref{4operator} by comparing it with the FBHF splitting algorithm \eqref{FBHF}. All implementations were carried out in MATLAB, and the experiments were conducted on a PC equipped with an Intel Core i7-11800H CPU 2.30GHz with 32GB RAM, running Windows 11.

\subsection{Nonlinear constrained optimization problems}	
	{\rm
In this subsection, we consider the following nonlinear constrained optimization problem, which constitutes an important application of the monotone inclusion problem \eqref{ABC}:
\begin{equation}
	\label{fh}
	\min_{x \in C}f(x)+h(x),
\end{equation}
where $C= \{ x \in \mathcal{H} | ( \forall i \in \{1,...,q\}) \ g_i(x) \leq 0\} $, $f: \mathcal{H} \rightarrow ( -\infty ,+\infty]$ is a proper, convex  and  lower semi-continuous function, for every $i \in \{1,...,q\}$, $g_i : \dom(g_i) \subset \mathcal{H} \rightarrow \mathbb{R}$, and $h: \mathcal{H} \rightarrow \mathbb{R}$ are $C^1$ convex functions in $\nt \dom g_i$ and $\mathcal{H}$, respectively, and $\nabla h$ is $\beta$-Lipschitz. The solution to problem \eqref{fh}  corresponds to the saddle points of the Lagrangian
\begin{equation*}
	L(x,u)=f(x)+h(x)+u^{\top}g(x)-\iota_{\mathbb{R}_+^q}(u),
\end{equation*}
where $\iota_{\mathbb{R}_+^q}$ denotes the indicator function of $\mathbb{R}_+^q$. Under  standard qualification conditions, the saddle points can be characterized as solutions to the following  monotone inclusion problem \cite{{RT},{FBHF}}: find $x \in Y$ such that $\exists u \in \mathbb{R}_{+}^q$,
\begin{equation}
	\label{ABCxu}
	(0,0) \in A(x,u)+B(x,u)+C(x,u),
\end{equation}
where $Y \subset \mathcal{H}$ is a nonempty closed convex set modeling the prior information of the solution. The operators are defined as follows: $A:(x,u)\mapsto\partial f(x)\times{N_{\mathbb{R}_+^q}u}$, which is maximally monotone; $B:(x,u)\mapsto(\sum_{i=1}^q u_i \nabla g_i(x),-g_1(x),...,-g_q(x))$, which is nonlinear, monotone and continuous, and $C:(x,u)\mapsto(\nabla h(x),0)$ is $\frac{1}{\beta}$-cocoercive.

Let  $\mathcal{H}=\mathbb{R}^N$, $f=\iota_{[0,1]^N}$, $g_i(x)=d_i^{\top}x$ ($\forall i \in  \{1,...,q\}$) with $d_1,\ldots,d_q\in \mathbb{R}^N$, and  $h(x)=\frac{1}{2}\|Gx-b\|^2$ with  $G$ being an $m \times N$ real matrix, $N=2m$, $b\in \mathbb{R}^m$. Then the  operators in \eqref{ABCxu} become
		\begin{equation}
			\label{A1}
			\aligned
			&A:(x,u)\mapsto\partial{\iota_{[0,1]^N}(x)}\times{N_{\mathbb{R}_+^q}u},\\
			& B:(x,u)\mapsto(D^\top u,-Dx),\\
			& C:(x,u)\mapsto(G^\top(Gx-b),0),\\
			\endaligned
		\end{equation}
		where  $x\in \mathbb{R}^N$, $u\in \mathbb{R}_+^q$, $D=[d_1,\ldots,d_q]^\top$. It is easy to see that the operator $A$ is a maximally monotone operator, $B$ is a $L$-Lipschitz operator with $L=\|D\|$, and $C$ is a $\frac{1}{\beta}$-cocoercive operator with $\beta =\|G\|^{2}$. Therefore, problem \eqref{ABCxu} can be solved by  FBHF splitting algorithm \eqref{FBHF}.
		
On the other hands, we decompose $B$ into $B_1+B_2$, where
			\begin{equation}
			\label{B1B2}
			\aligned
			& B_1:(x,u)\mapsto(\frac{1}{2}D^\top u,-\frac{1}{2}Dx),\\
			& B_2:(x,u)\mapsto(\frac{1}{2}D^\top u,-\frac{1}{2}Dx).\\
			\endaligned
		\end{equation}
We know that $A+B_1$ is maximally monotone, $B_1$ and $B_2$ are both $\frac{L}{2}$-Lipschitz operator with $L=\|D\|$. Therefore, problem \eqref{ABCxu} can also be solved by  Algorithm \eqref{4operator}.

In the numerical experiments, the matrices $G,D$, and the vector $b$, as well as initial values $(x_{0},u_{0})$,  are all randomly generated. In the FBHF,   we set $\gamma_k=0.9\chi$ where $\chi=\frac{4}{\beta+\sqrt{\beta^2+16L^2}}$. For Algorithm \eqref{4operator}, the step-size is chosen as $\gamma_k=0.9 \bar{\gamma}$, with $\bar{\gamma}=\frac{(-\beta/2+2L)+\sqrt{(\beta/2+2L)^2+12L^2}}{6L^2}$. We use
$$
E_k=\frac{\|(x_{k+1}-x_k,u_{k+1}-u_k)\|}{\|(x_k,u_k)\|}<10^{-6},
$$
as the stopping criterion. We test eight problem sizes, each with ten randomly generated instances. The average number of iterations (denoted as ``av.iter") and the average CPU time in seconds (denoted as ``av.time") over the ten instances are reported in Table \ref{results-1}. The results demonstrate that Algorithm \eqref{4operator} outperforms FBHF in both iteration count and computational time.
\begin{table}[h!]
	\centering
	\footnotesize
	\renewcommand\arraystretch{1.5}
	\setlength\tabcolsep{5pt}
	\caption{Computational results with FBHF and Algorithm \eqref{4operator}}
	\begin{tabular}{c c c c c c}
		\hline
		\multicolumn{2}{c}{\textbf{Problem size}} &
		\multicolumn{2}{c}{\textbf{av.iter}} &
		\multicolumn{2}{c}{\textbf{av.time (s)}} \\
		\cline{1-2} \cline{3-4} \cline{5-6}
		$N$ & $q$ & FBHF & Algorithm \eqref{4operator} & FBHF & Algorithm \eqref{4operator} \\
		\hline
		\multirow{4}{*}{2000}
		& 100  & 4953.2    & \textbf{4309}    & 15.627   & \textbf{8.4047} \\
		& 200  & 4187.6  & \textbf{3411.8}  & 20.841    & \textbf{7.9688} \\
		& 500  & 3265.9    & \textbf{2620.4}  & 46.917    & \textbf{14.627} \\
		& 1000 & 2601.5  & \textbf{1882.2}  & 120.76    & \textbf{34.02} \\
		\hline
		\multirow{3}{*}{4000}
		& 100  & 6572.5    & \textbf{4891.1}    & 209.93   & \textbf{122.42} \\
		& 200  & 5320.6  & \textbf{3603}  & 225.48    & \textbf{107.37} \\
		& 500  & 3870.8  & \textbf{2564.2}  & 390.74    & \textbf{107.65} \\
		& 1000  & 2801.2  & \textbf{1977.5}  & 540.77   & \textbf{123.37} \\
		\hline
	\end{tabular} \label{results-1}
\end{table}
\subsection{Portfolio optimization problem}	
This subsection focuses on the classical mean-variance model for minimizing portfolio risk. Given a portfolio consists of $n$ different assets, and the rate of return of asset $i$ is a random variable with an expected value $m_i$. The objective is to determine the investment weights $x_i$ that minimize risk, subject to achieving a specified minimum expected portfolio return. Let $H$ represent the covariance matrix of the asset returns. The size of $H$ is $225\times 225$, and $\|H\|= 0.2263$. Specifically, the portfolio optimization problem is defined as follows:
\begin{equation}\label{portfolio-problem}
	\begin{aligned}
		\min_{x\in R^{225}} & \frac{1}{2}x^{T}Hx, \\
		 s.t. & \sum_{i=1}^{75} x_i \geq 0.3, \
		 \sum_{i=76}^{150} x_i \geq 0.3, \
		 \sum_{i=151}^{225} x_i \geq 0.3, \\
		 & \sum_{i=1}^{225}m_i x_i \geq r, \ \sum_{i=1}^{225}x_i = 1, 0\leq x_i \leq 1, i = 1, \cdots, 225.
	\end{aligned}
\end{equation}
Define $g(x)=Dx+b=$
\begin{equation}
\left[
\begin{array}{ccccccccc}
-m_1 & \cdots & -m_{75} & -m_{76} & \cdots & -m_{150} & -m_{151} & \cdots & -m_{225} \\
-1 & \cdots & -1 & 0 & \cdots & 0 & 0 & \cdots & 0\\
0 & \cdots & 0 & -1 & \cdots & -1 & 0 & \cdots & 0\\
0 & \cdots & 0 & 0 & \cdots & 0 & -1 & \cdots & -1
	\end{array}
	\right]
	\left[
	\begin{array}{cccc}
		x_1\\
		x_2\\
		\vdots\\
		x_{225}
	\end{array}
	\right]
	+\left[
	\begin{array}{cccc}
		r\\
		0.3\\
		0.3\\
		0.3
	\end{array}
	\right]
\end{equation}
and $C_2 = \{x\in R^n | \sum_{i=1}^{n}x_i = 1,\ 0\leq x_i \leq 1, i = 1, \cdots, n\}$, then the solution to the optimization problem \eqref{portfolio-problem} can be found via the saddle points of the Lagrangian function
\begin{equation*}
	L(x,u)=\frac{1}{2}x^{T}Hx+\iota_{C_2}(x)+u^{\top}g(x)-\iota_{\mathbb{R}_+^4}(u).
\end{equation*}
Under some standard qualifications, it can be obtained by solving a monotone inclusion problem: find $x \in \mathbb{R}^{225}$ such that $\exists u \in \mathbb{R}_{+}^4$,
\begin{equation}
	(0,0) \in A(x,u)+B(x,u)+C(x,u),
\end{equation}
where
  \begin{equation}
	\label{A2}
	\aligned
	&A:(x,u)\mapsto\partial{\iota_{C_2}(x)}\times{N_{\mathbb{R}_+^4}u},\\
	&B:(x,u)\mapsto(D^\top u,-Dx-b),\\
	&C:(x,u)\mapsto(Hx,0).\\
	\endaligned
\end{equation}
It is easy to see that the operator $A$ is a maximally monotone operator, $B$ is a $L$-Lipschitz operator with $L=\|D\|$, and $C$ is a $\frac{1}{\beta}$-cocoercive operator with $\beta =\|H\|$. Therefore, problem \eqref{portfolio-problem} can be solved by  FBHF splitting algorithm \eqref{FBHF}. Similar to \eqref{B1B2}, we can decompose $B$ into $B_1+B_2$. Thus, problem \eqref{ABCxu} can also be solved by  Algorithm \eqref{4operator}.

This study investigates the portfolio optimization problem \eqref{portfolio-problem}, as introduced in the MATLAB optimization case library, using the benchmark dataset from OR-Library \cite{ChangCOR2000}. The asset returns $m_i$ in this dataset range from $-0.008489$ to $0.003971$. In our experiments, we consider three target return level $r=0.001, 0.002$, and $0.003$.  The obtained results are presented in Table \ref{results-2}, which shows that the compared algorithms achieve nearly the same accuracy. Furthermore, as illustrated in Figure \ref{solution-portfolio}, the computed solutions for $r \in \{0.001, 0.002, 0.003\}$ consistently exhibit sparsity, confirming the effectiveness of the proposed approach in promoting sparse portfolio selection.
\begin{table}[htbp]
	\small
	\centering
	\caption{The objective function values (Obj), number of iterations (Iter), and CPU time (in seconds) for the compared methods on the portfolio optimization problem.}
	\begin{tabular}{c|c|ccc}
		\hline
		$r$ &  Methods &  Obj   &  Iter  & CPU  \\
		\hline
		\hline
		\multirow{2}[1]{*}{0.001}
		& FBHF   & $1.6386e-4$        & $\textbf{189499}$       & $5.0618$       \\
		& Algorithm \eqref{4operator}   & $1.6386e-4$      & $202203$       & $\textbf{4.9347}$       \\
		\hline
		\multirow{2}[1]{*}{0.002}
		& FBHF   & $2.0097e-4$        & $\textbf{193721}$       & $\textbf{4.7194}$       \\
		& Algorithm \eqref{4operator}   & $2.0097e-4$        & $214505$       & $5.4613$       \\
		\hline
		\multirow{2}[1]{*}{0.003}
		& FBHF   & $2.7695e-4$        & $156838$       & $3.8788$       \\
		& Algorithm \eqref{4operator}   & $\textbf{2.7694e-4}$        & $\textbf{154192}$       & $\textbf{3.7049}$       \\
		\hline
	\end{tabular}\label{results-2}
\end{table}

\begin{figure}[h]
	\centering
	\setlength{\abovecaptionskip}{-3pt}
	\setlength{\belowcaptionskip}{-2pt} %
	\captionsetup[subfigure]{labelformat=simple, skip=2pt} %

	\subfigure[$r=0.001$]{
		\scalebox{0.3}{\includegraphics{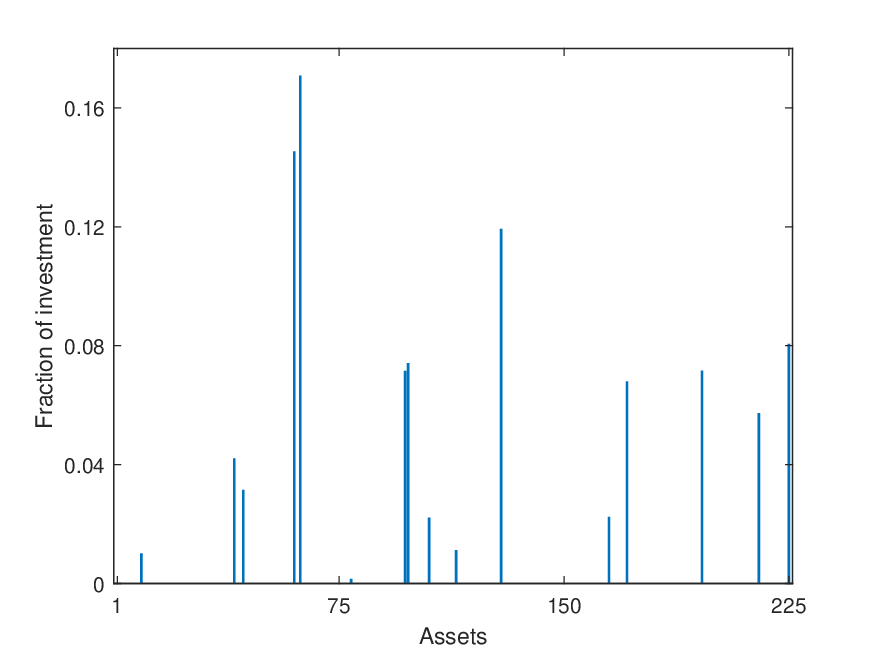}}
	} \hspace{1mm}
	\subfigure[$r=0.002$]{
		\scalebox{0.3}{\includegraphics{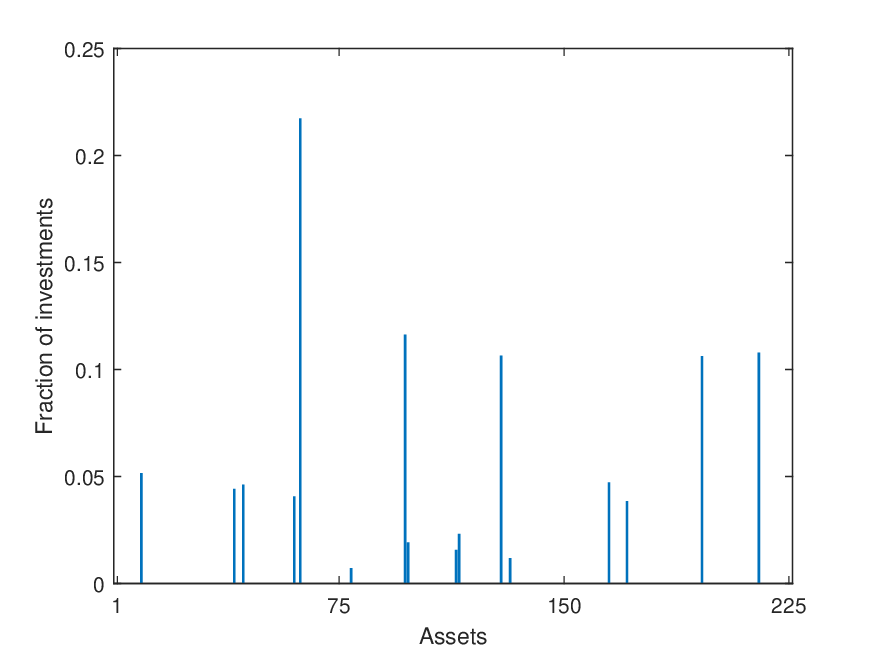}}
	} \hspace{1mm}
	\subfigure[$r=0.003$]{
		\scalebox{0.3}{\includegraphics{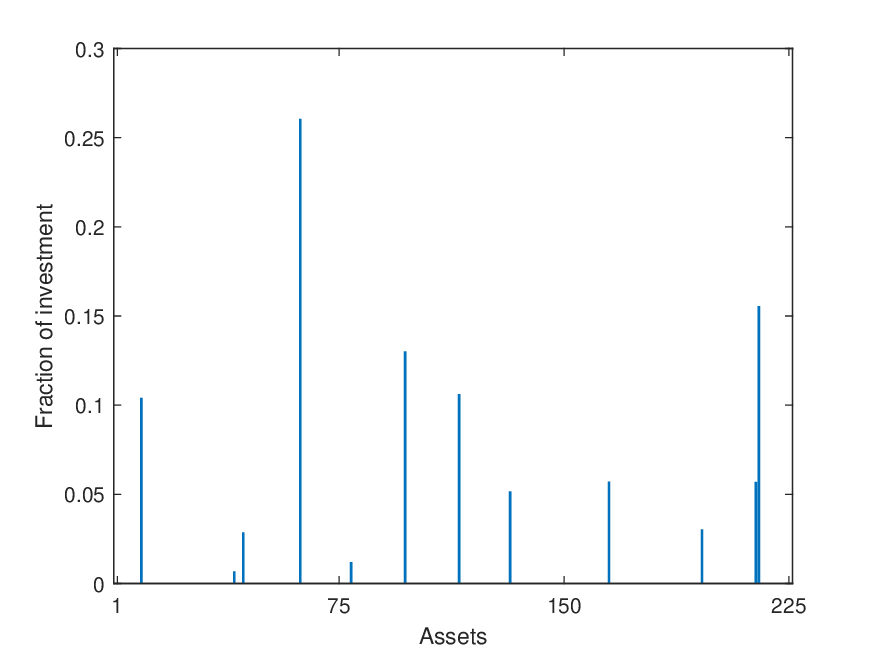}}
	}
	
	\vspace{3mm}

	\subfigure[$r=0.001$]{
		\scalebox{0.3}{\includegraphics{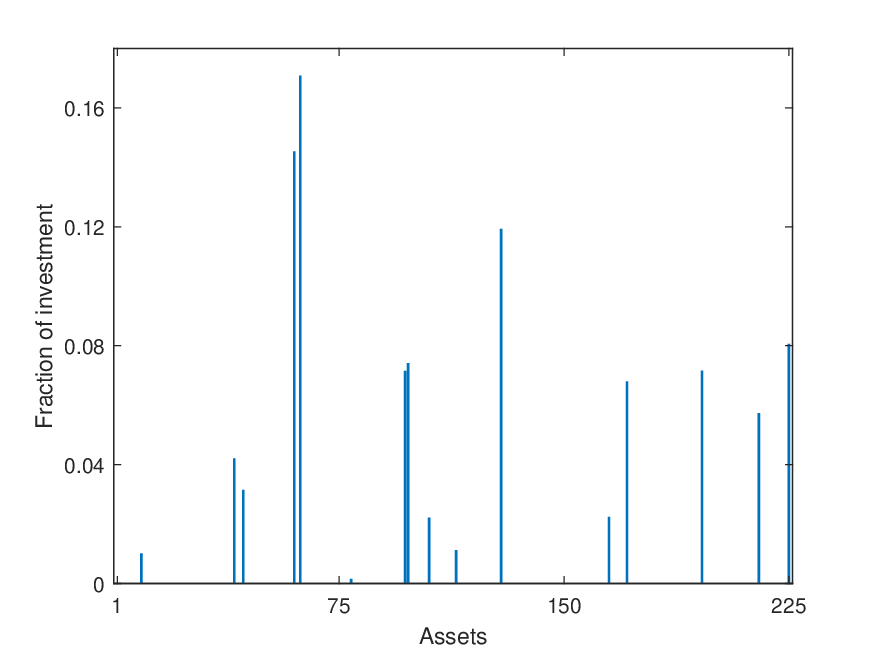}}
	} \hspace{1mm}
	\subfigure[$r=0.002$]{
		\scalebox{0.3}{\includegraphics{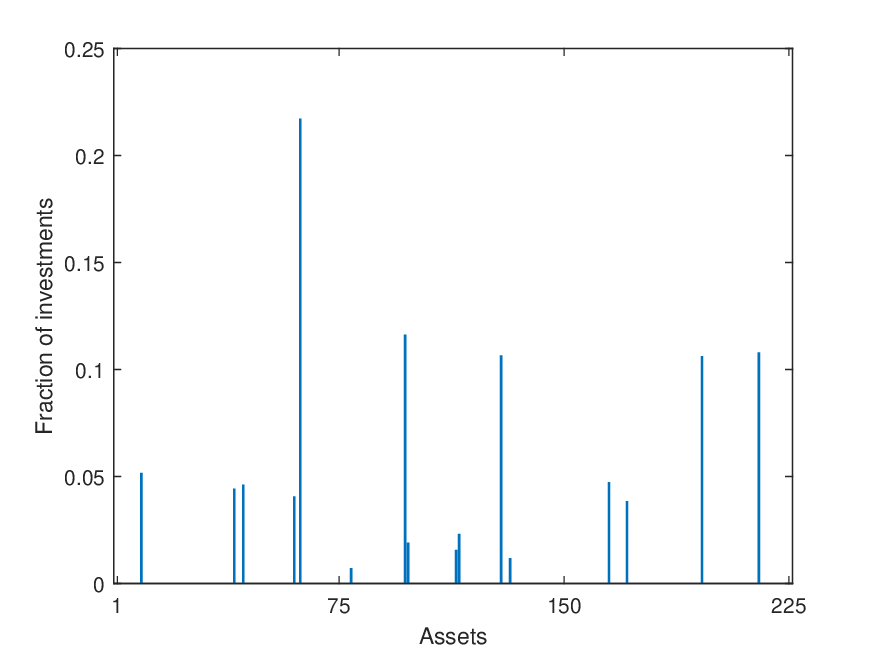}}
	} \hspace{1mm}
	\subfigure[$r=0.003$]{
		\scalebox{0.3}{\includegraphics{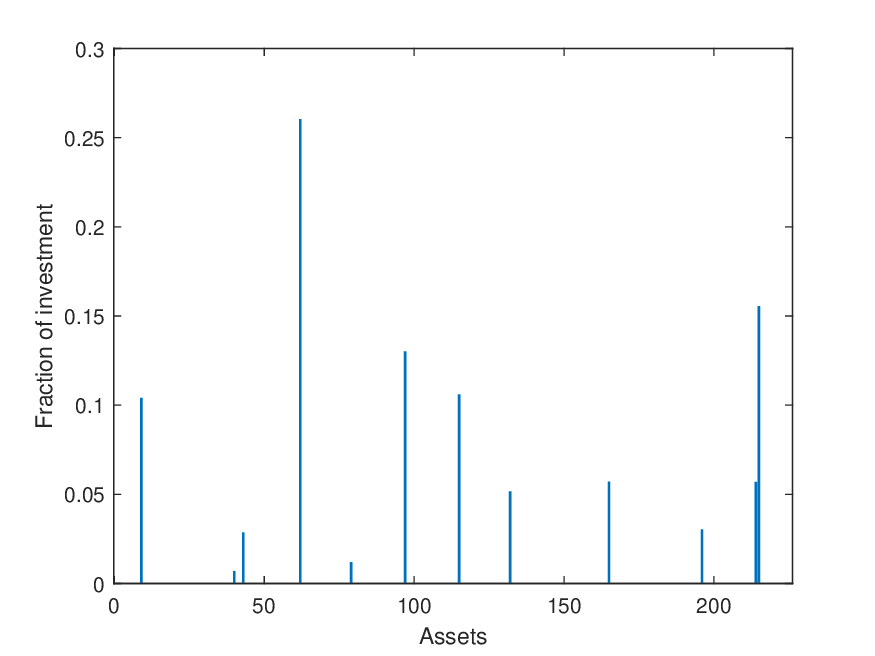}}
	}
	\vspace{3mm}
	\caption{Portfolio optimization solutions for problem \eqref{portfolio-problem}: Algorithm \eqref{4operator} (top row) versus FBHF splitting \eqref{FBHF} (bottom row).}
	\label{solution-portfolio}
\end{figure}

}

\section{Conclusions}
Monotone inclusion problems have emerged as a pivotal framework in applied mathematics, with significant applications across computational domains including signal processing, computer vision, and machine learning. In this paper, we introduced a novel splitting algorithm for solving \eqref{ABC}, which extended the FBHF splitting algorithm by incorporating a nonlinear momentum term. We established the weak convergence of the algorithm under appropriate step-size conditions and proved a linear convergence rate under the strong monotonicity assumption. We further developed a stochastic variance-reduced version of the algorithm with momentum, tailored for solving finite-sum monotone inclusion problems. Under standard conditions, we proved both weak almost sure convergence and linear convergence of the stochastic algorithm. Preliminary numerical experiments on nonlinear constrained optimization problems and portfolio optimization problems are presented to demonstrate the computational efficacy of the proposed algorithms.

\section*{Funding}

This work was supported  by the National Natural Science Foundations of China (12031003, 12571491, 12571558), the Guangzhou Education Scientific Research Project 2024 (202315829),  the Innovation Research Grant NO.JCCX2025018 for Postgraduate of Guangzhou University, and the Jiangxi Provincial Natural Science Foundation (20224ACB211004).

\section*{Competing Interests}

The authors declare no competing interests.

\section*{Data Availability Statement}

The data that support the findings of this study are available on request from the corresponding author upon reasonable request.



\end{document}